\documentclass[10pt]{article}
\textwidth 160mm
\textheight 220mm
\topmargin -5mm
\oddsidemargin -10mm
\usepackage{verbatim,amsmath,amssymb,amsfonts,amsthm,amscd,graphicx,psfrag,epsfig,mathrsfs,stmaryrd}
\usepackage{tikz-cd}
\newtheorem{theorem}{Theorem}[section]

\newtheorem{theorem-definition}[theorem]{Theorem-Definition}
\newtheorem{theorem-construction}[theorem]{Theorem-Construction}
\newtheorem{lemma-definition}[theorem]{Lemma--Definition}
\newtheorem{lemma-construction}[theorem]{Lemma--Construction}
\newtheorem{lemma}[theorem]{Lemma}
\newtheorem{proposition}[theorem]{Proposition}
\newtheorem{corollary}[theorem]{Corollary}
\newtheorem{conjecture}[theorem]{Conjecture}
\theoremstyle{definition}
\newtheorem{definition}[theorem]{Definition}
\newtheorem{eg}[theorem]{Example}
\newtheorem{example}[theorem]{Example}

\newtheorem{remark}[theorem]{Remark}
\setcounter{MaxMatrixCols}{20}
\newcommand{\old}[1]{}

\newcommand{\D}{{\mathbb D}}

\newcommand{\R}{{\mathbb R}}

\renewcommand{\P}{{\mathbb P}}

\makeatletter
\newcommand{\extp}{\@ifnextchar^\@extp{\@extp^{\,}}}
\def\@extp^#1{\mathop{\bigwedge\nolimits^{\!#1}}}
\makeatother
\newcommand\restr[2]{{
		\left.\kern-\nulldelimiterspace 
		#1 
		\vphantom{\big|} 
		\right|_{#2} 
}}

\newcommand{\ra}{\rightarrow}
\newcommand{\be}{\begin{equation}}
	\newcommand{\ee}{\end{equation}}
\newcommand{\bt}{\begin{theorem}}
	
		\newcommand{\et}{\end{theorem}}
	\newcommand{\bd}{\begin{definition}}
		\newcommand{\ed}{\end{definition}}
	\newcommand{\bp}{\begin{proposition}}
		\newcommand{\ep}{\end{proposition}}
	
\newcommand{\bl}{\begin{lemma}}
	\newcommand{\el}{\end{lemma}}
\newcommand{\bc}{\begin{corollary}}
	\newcommand{\ec}{\end{corollary}}
\newcommand{\bcon}{\begin{conjecture}}
	\newcommand{\econ}{\end{conjecture}}
\newcommand{\la}{\label}
\usepackage{mathtools}

\usepackage{graphicx}
\usepackage{amsrefs}
\usepackage{geometry}
\usepackage{graphicx}
\usepackage{mathtools}
\usepackage{bbm}
\usepackage{bm}
\usepackage{amsmath,amsthm,amssymb,graphicx,mathtools,tikz-cd,hyperref}
\usepackage[]{subcaption}
\usetikzlibrary{positioning}
\bibliographystyle{amsalpha}
\usepackage{amssymb}

\newcommand{\red}[1]{{\color{red} #1}}
\newcommand{\blue}[1]{{\color{blue} #1}}

\newcommand{\snote}[1]{{\color{red} \sf [ #1 ]}}

\usepackage{blkarray}

\newcommand{\Cat}{\mathrm{Cat}}
\newcommand{\IG}{\mathrm{IG}}
\newcommand{\LG}{\mathrm{LG}}
\newcommand{\Gr}{\mathrm{Gr}}
\DeclareMathOperator{\Ker}{\mathrm{Ker}}
\DeclareMathOperator{\Span}{\mathrm{Span}}
\newcommand{\Karp}{\mathrm{Karp}}

\newcommand{\wt}{\widetilde}
\newcommand{\Alt}{\bigwedge\nolimits} 

\newenvironment{sbm}
{\left[ \begin{smallmatrix}
	}
	{ 
	\end{smallmatrix} \right]
}

\usepackage{blkarray}

\title{Electrical networks and Lagrangian Grassmannians}
\author{Sunita Chepuri, Terrence George and David E Speyer}
\date{}

\begin{document}
	
	\maketitle
	
	\begin{abstract}
Cactus networks were introduced by Lam as a generalization of planar electrical networks.  He defined a map from these networks to the Grassmannian Gr($n+1,2n$) and showed that the image of this map, $\mathcal X_n$ lies inside the totally nonnegative part of this Grassmannian.  In this paper, we show that $\mathcal X_n$ is exactly the elements of Gr($n+1,2n$) that are both totally nonnegative and isotropic for a particular skew-symmetric bilinear form.  For certain classes of cactus networks, we also explicitly describe how to turn response matrices and effective resistance matrices into points of Gr($n+1,2n$) given by Lam's map.  Finally, we discuss how our work relates to earlier studies of total positivity for Lagrangian Grassmannians.
\end{abstract}
	
	\section{Introduction}
	%
	%
	%
	%
	%
	%
	%
	%
	%

	This paper is motivated by the study of planar networks of electrical resistors.
	It builds on work of Curtis, Ingerman, and Morrow~\cite{CIM} and Lam~\cite{Lam18}, as well as older work, and explains how those ideas are clarified by thinking about total positivity in Lagrangian Grassmannians.  
	
	Let $G$ be a planar graph embedded in a disc $\D$ with $n$ vertices on the boundary of $\D$, labeled $1,2,\dots,n$, and a positive real number $c(e)$ associated to each edge $e$. We think of $G$ as a network of resistors, where the edge $e$ has conductance $c(e)$ (equivalently, resistance $1/c(e)$), and we will be interested in aspects of the network which are measurable by connecting batteries and electrical meters to these boundary vertices.  We will use the planar electrical network in Figure~\ref{EGFigure} as our running example.
	
	\begin{figure}
		\centerline{\includegraphics[width=0.3 \textwidth]{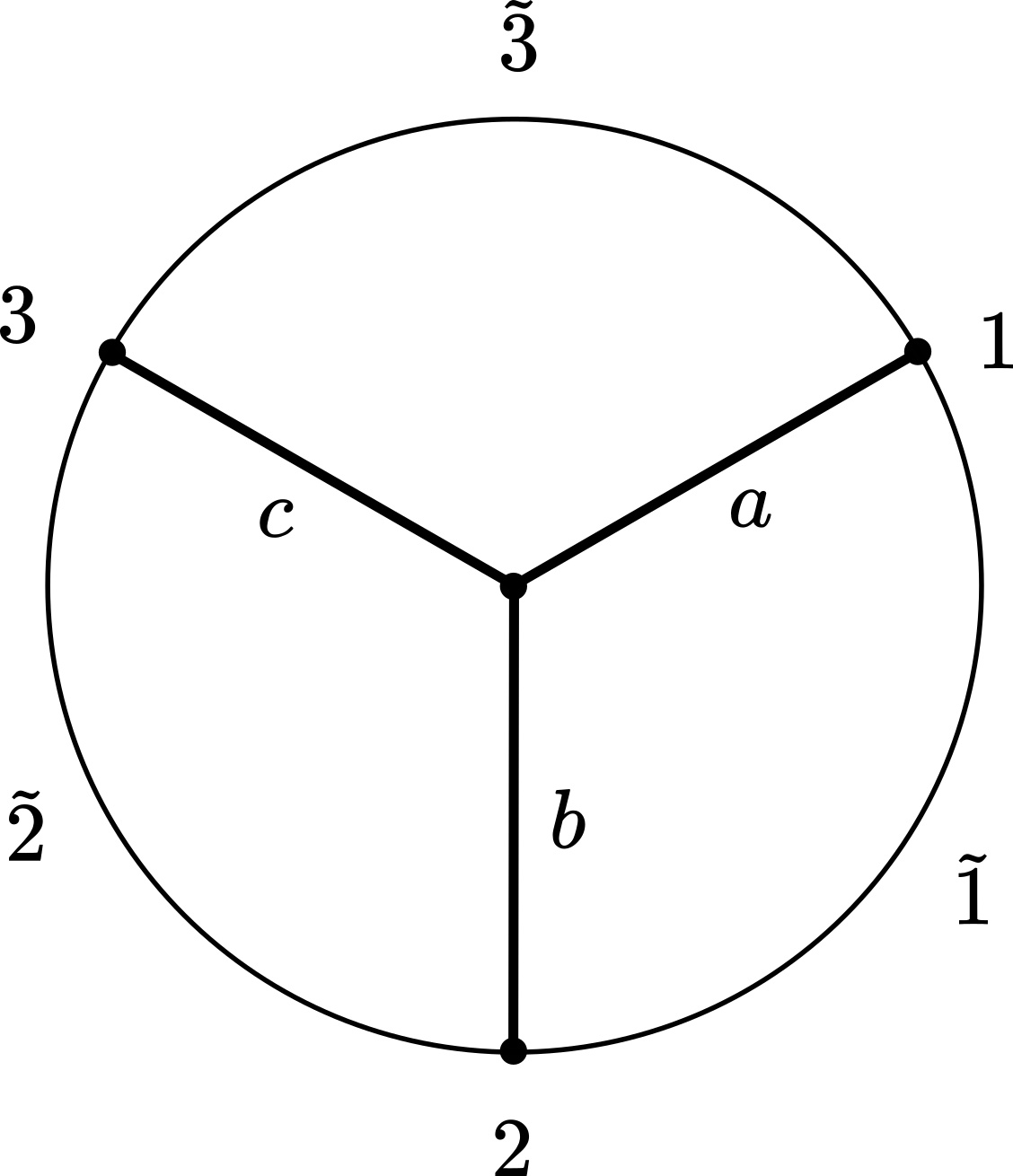}}
		\caption{Our running example of a planar electrical network} \label{EGFigure}.
	\end{figure}

	In particular, we can imagine placing vertex $i$ at voltage $V_i$ and measuring the resulting current $J_i$ flowing out of each vertex $i$ (if the current flows in, then $J_i$ is negative). 
	The map from the voltage vector $(V_1, V_2, \ldots, V_n)$ to the current vector $(J_1, J_2, \ldots, J_n)$ is linear, given by a symmetric matrix $L$ whose rows and columns sum to $0$; the matrix $L$ is called the \textit{response matrix}. (The reader who would like purely mathematical definitions of current, voltage and so forth should turn to Section~\ref{sec:harm-iso}.)
	We consider two planar networks to be electrically equivalent if they have the same response matrix.
	The classification of planar networks up to equivalence was carried out by Curtis, Ingerman, and Morrow~\cite{CIM}, building on work of de Verdi\`ere, Gitler, and Vertigan~\cite{CGV} and was then rewritten and improved by Lam~\cite{Lam}; we will describe their results shortly.

	We define a \textit{grove} of $G$ to be a subgraph $F$ of $G$ which contains every vertex of $G$, contains no cycles, and where every connected component of $F$ contains a vertex on the boundary of $\D$. The \textit{weight} of a grove $F$, written $w(F)$, is $\prod_{e \in F} c(e)$. For $i$ and $j$ distinct vertices on the boundary of $G$, we have (from \cites{Kirchoff,KW11})
	\begin{equation} L_{ij} = \frac{\sum_{F \ \text{has $n-1$ components, $i$ and $j$ in the same component}} w(F)}{\sum_{F \ \text{has $n$ components}} w(F)} . 
		\label{ResponseFormula} \end{equation}
	The value of $L_{ii}$ is determined by the condition that the rows and columns of $L$ sum to $0$.
	Another similar formula, due to Kirchhoff, describes the effective resistance $R_{ij}$ between vertices $i$ and $j$ as
	\begin{equation} R_{ij} =  \frac{\sum_{F \ \text{has two components, $i$ and $j$ in different components}} w(F)}{\sum_{F \ \text{connected}} w(F)}. 
		\label{ResistanceFormula} \end{equation}
	We set $R_{ii}=0$, so we may speak of the effective resistance matrix $R$.
	
	Motivated by Equations~\eqref{ResponseFormula} and~\eqref{ResistanceFormula}, we consider sums of $w(F)$ over groves $F$ with specified boundary connectivity. 
	We need some combinatorial notation first.
	
	We abbreviate $\{ 1,2,\ldots, n \}$ to $[n]$. A \textit{noncrossing partition} of $[n]$ is a set partition $\{ B_1, B_2, \ldots, B_k \}$ of $[n]$ which does not contain any two distinct blocks $B_i$ and $B_j$ with $a$, $c \in B_i$, with $b$, $d \in B_j$ and $a<b<c<d$.
	The number of noncrossing partitions of $[n]$ is the Catalan number  $\Cat_n := \tfrac{(2n)!}{n! (n+1)!}$. 
	Each grove $F$ defines a noncrossing partition $\sigma(F)$ of $[n]$ where $i$ and $j$ are in the same block of $\sigma(F)$ if and only if the boundary vertices $i$ and $j$ are in the same connected component of $F$. 
	For a non-crossing partition $\sigma$, define the \textit{grove measurements}
	\[ \Lambda_{\sigma} = \sum_{\sigma(F) = \sigma} w(F). \]
	It turns out \cite{Lam18}*{Proposition 4.4} that two networks are electrically equivalent if and only if the quantities $\Lambda_{\sigma}$ are proportional. 
	In other words, it is natural to coordinatize planar electrical networks using points in $\R \P^{\Cat_n -1 }$.
	
	\begin{eg}
		For the planar electrical network in Figure~\ref{EGFigure}, we have
		\[ \Lambda_{\{ 123 \}} = abc,\ \Lambda_{\{1\},\, \{23 \}} = bc,\ \Lambda_{\{2\},\, \{13 \}} = ac,\ \Lambda_{\{3\},\, \{12 \}} = ab,\ \Lambda_{\{1\},\, \{2 \},\, \{3 \}} = a+b+c . \]
		\[ L = \begin{bmatrix}
			\frac{-ab-ac}{a+b+c} & \frac{ab}{a+b+c} & \frac{ac}{a+b+c} \\[0.1 cm]
			\frac{ab}{a+b+c} & \frac{-ab-bc}{a+b+c} & \frac{bc}{a+b+c} \\[0.1 cm]
			\frac{ac}{a+b+c} &  \frac{bc}{a+b+c} &  \frac{-ac-bc}{a+b+c} \\[0.1 cm]
		\end{bmatrix} \qquad 
		R = \begin{bmatrix}
			0 & \frac{ac+bc}{abc} & \frac{ab+bc}{abc} \\[0.1 cm]
			\frac{ac+bc}{abc} & 0 & \frac{ab+ac}{abc} \\[0.1 cm]
			\frac{ab+bc}{abc} & \frac{ab+ac}{abc} & 0 \\[0.1 cm]
		\end{bmatrix}=
		\begin{bmatrix}
			0&\frac{1}{a}+\frac{1}{b}&\frac{1}{a}+\frac{1}{c} \\[0.1 cm]
			\frac{1}{a}+\frac{1}{b}&0&\frac{1}{b}+\frac{1}{c} \\[0.1 cm]
			\frac{1}{a}+\frac{1}{c}&\frac{1}{b}+\frac{1}{c}&0 \\[0.1 cm]
		\end{bmatrix}
		\]
	\end{eg}
	
	We now  preview a technical issue. 
	The subset of $\R \P^{\Cat_n -1 }$ corresponding to electrical networks is not closed.
	We can already see this in the case that $n=2$ and $G$ is a single edge from $1$ to $2$. If that single edge has conductance $c$, then $\Lambda_{\{1,2 \}} = c$ and $\Lambda_{\{1\}, \{2 \}} = 1$. So the subset of $\R \P^1$ corresponding to this network is the open interval $\{ [c:1] : 0 < c < \infty \}$. The limiting point $[0:1]$ can be achieved by taking $G$ to be graph with two vertices and no edges (in other words, deleting the lone edge from our initial graph). However, the limiting point $\{ [1:0] \}$ does not correspond to an electrical network. 
	Intuitively, this limiting point occurs when the conductance goes to $\infty$ or, in the language of electrical engineering, the boundary points $1$ and $2$ are ``shorted" to each other. 
	This idea motivates Lam's \text{cactus networks}, a generalization of electrical networks.
	Roughly speaking, a cactus network is a planar electrical network with some boundary vertices shorted together, so that the shorted vertices form a noncrossing partition of $[n]$; see Section~\ref{sec:cactus-nets} for details.
	By working with cactus networks rather than electrical networks, we obtain a closed subset of $\R \P^{\Cat_n-1}$. The reader should feel free to continue thinking of electrical networks for almost all purposes.

	Given a subset $I$ of $[n]$, and a non-crossing partition $\sigma$ of $[n]$, we say that $I$ and $\sigma$ are \text{concordant} if there is exactly one element of $I$ in each block of $\sigma$. (This notation differs from Lam, see Remark~\ref{rem:complement_warning}.)  Thus, Equations~\eqref{ResponseFormula} and~\eqref{ResistanceFormula} can be rewritten as
	\begin{equation} L_{ij} = \frac{\Lambda_{\{i,j\},\ \text{all other blocks singleton}}}{\Lambda_{\text{all singletons}}}
		\qquad R_{ij} = \frac{\sum_{\{ i, j \} \ \text{concordant to}\ \sigma} \Lambda_{\sigma}}{\Lambda_{\{1,2,\ldots, n \}}}
		\label{LambdaRatios} 
	\end{equation}
	
	Lam realized that it is valuable to work, not with non-crossing partitions of $[n]$, but with certain non-crossing partitions of $[2n]$. 
	Let $[\tilde{n}]$ be a second $n$ element index set, with elements $\{ \tilde{1}, \tilde{2}, \ldots, \tilde{n} \}$. We consider $[n] \sqcup [ \tilde{n} ]$ to be circularly ordered in the order $1$, $\tilde{1}$, $2$, $\tilde{2}$, \dots, $n$, $\tilde{n}$.  Given a non-crossing partition $\sigma$ of $[n]$ with $k$ blocks, there is a unique non-crossing partition $\tilde{\sigma}$ of $[\tilde{n}]$ with $n+1-k$ blocks such that $\sigma \sqcup \tilde{\sigma}$ forms a non-crossing partition of $[n] \sqcup [ \tilde{n} ]$. The non-crossing partition $\tilde{\sigma}$ is called the \textit{Kreweras complement} of $\sigma$~\cite{Kreweras}, and we will call $(\sigma, \tilde{\sigma})$ a \textit{Kreweras pair}.
	Given $I \subseteq [n]$ and $\tilde{I} \subseteq [\tilde{n}]$, and Krewaras pair $(\sigma, \tilde{\sigma})$,  we will say that $(I, \tilde{I})$ and $(\sigma, \tilde{\sigma})$ are concordant, if $I$ and $\sigma$ are concordant and $\tilde{I}$ and $\tilde{\sigma}$ are likewise concordant. So, in this case, $\# (I \sqcup \tilde{I}) = n+1$. 
	
	Consider the vector space $\R^{2n}$ with basis $e_1, e_{\wt{1}},\dots,e_n,e_{\wt{n}}$. For $I \sqcup \wt{I} \subseteq [n] \sqcup [\wt{n}]$, we put $e_{I, \wt{I}} = \Alt_{i \in I \sqcup \wt{I}} e_i$, where the wedge product is taken in the order induced on $I \sqcup \wt{I}$ from the total order $1 < \wt{1} < 2 < \wt{2} < \cdots < n < \wt{n}$. The $e_{I,\wt I}$ form a basis for $\extp^{n+1} \R^{2n} \cong \R^{{2n \choose n+1}}$. Let $\Delta_{I,\wt I}$ denote the coordinate of $\extp^{n+1} \R^{2n}$ corresponding to $e_{I,\wt I}$. We put
	\[ \Delta_{I, \tilde{I}} = \sum_{(\sigma,\tilde{\sigma}) \ \text{concordant with} \ (I, \tilde{I})} \Lambda_{\sigma} . \]
	So the $\Delta$'s are related to the $\Lambda$'s by a linear map, which can be checked~\cite{Lam}*{Proposition~5.19} to be injective, and we get a linear embedding of $\R \P^{\Cat_n -1}$ into $\R \P^{\binom{2n}{n+1} -1}$.  {We refer to the subset of $\R \P^{\binom{2n}{n+1} -1}$ where the coordinates $\Delta_{I, \tilde{I}}$ are realized by some cactus network as $\mathcal{X}_n$.}
	\begin{eg} \label{DeltaEG}
		Continuing with our running example, we have
		\[ \Delta_{\{1\},\, \{\wt{1} \ \wt{2}\ \wt{3} \}} =  \Delta_{\{2\},\, \{\wt{1} \ \wt{2}\ \wt{3} \}} =  \Delta_{\{3\},\, \{\wt{1} \ \wt{2}\ \wt{3} \}} = \Lambda_{\{123\}}= abc \]
		\[ \Delta_{\{12 \},\, \{\wt{1}\ \wt{2}\}} = \Delta_{\{ 13 \},\, \{\wt{2}\ \wt{3}\}} = \Lambda_{\{1\}, \{23 \}} = bc \ \text{and rotations thereof}\]
		\[ \Delta_{\{12 \},\ \{\wt{2}\ \wt{3}\}} = \Lambda_{\{1 \},\, \{23 \}} + \Lambda_{\{2\},\ \{13 \}} = bc+ac \ \text{and rotations thereof} \]
		\[ \Delta_{\{ 123 \},\, \{ \wt{1} \}} = \Delta_{\{ 123 \},\, \{ \wt{2} \}} = \Delta_{\{ 123 \},\, \{ \wt{3} \}} = \Lambda_{\{1\},\, \{2\},\, \{3 \}} = a+b+c. \]
		The phrase ``and rotations thereof" indicates that one can, in each, case, obtain two more relations similar to this one by rotating the subsets of $[3] \sqcup [\wt{3}]$ being considered.
	\end{eg}

	\begin{remark}\label{rem:complement_warning}
		Lam actually works with the complementary set, $[n] \sqcup [\tilde{n}] \setminus (I \sqcup \tilde{I})$, so he refers to $\binom{2n}{n-1}$ throughout. Lam would say that $I$ and $\sigma$ are concordant when we would say that $[n] \setminus I$ and $\sigma$ are. We find our convention more convenient and will silently convert all of our references to Lam to use this complementary notation.
	\end{remark}
	
	We can rewrite Equations~\eqref{LambdaRatios} in terms of the $\Delta$'s in several equivalent ways, such as the one below. For convenience, we take $1 \leq i < j \leq n$:
	\begin{equation} L_{ij} = \frac{\Delta_{[n]\setminus \{ j \},\ \{\wt{i-1}, \wt{i} \}}}{\Delta_{[n],\ \{ \wt{i} \}}}
		\qquad 
		R_{ij} = \frac{\sum_{k=i}^{j-1} \Delta_{\{i,j \},\ [\wt{n}] \setminus \{ \wt{k} \}}}{\Delta_{\{i \},\ [\wt{n}]}} .
		\label{DeltaRatios} 
	\end{equation}
	
	{Lam shows that every point in $\mathcal{X}_n$ is the image, under the \emph{Pl\"ucker embedding} of a point in the \emph{Grassmannian}. }
	The {Grassmannian} $\Gr(n+1,2n)$ is the space of $(n+1)$-dimensional subspaces of $\R^{2n}$.  We can represent a point $X \in\Gr(n+1,2n)$ as an $(n+1)\times 2n$ matrix $M$ with rows $v_1,\dots,v_{n+1}$ such that $X=\Span( v_1,\dots,v_{n+1} )$.  Note that the matrices that satisfy these conditions are exactly those matrices we can obtain from $M$ by row operations. This lets us identify ${\rm Gr}(n+1,2n)$ with full rank $(n+1)\times 2n$ matrices modulo row operations. The {Pl\"ucker embedding} is the map
	\begin{align*}
		\iota:\Gr(n+1,2n) &\hookrightarrow \P{(\extp^{n+1} \R^{2n}})\\
		X=\Span( v_1,\dots,v_{n+1} )  &\mapsto [v_1 \wedge \dots \wedge v_{n+1}],
	\end{align*}
	where $v_1,\dots,v_{n+1}$ are vectors that span $X$.
	In coordinates, $\Delta_{I,\wt I}(X):=|M_{i_1,\dots,i_k}|$ where $|M_{i_1,\dots,i_k}|$ is the maximal minor obtained from columns $I \sqcup \wt I=\{i_1<i_2<\dots< i_{n+1}\}$ of $M$.  These coordinates on ${\rm Gr}(n+1,2n)$ are called \emph{Pl\"ucker coordinates}.  Note that this map is well defined because row operations scale all maximal minors of $M$ by a constant.	The \emph{totally nonnegative Grassmannian}, ${\rm Gr}_{\geq 0}(n+1, 2n)$, introduced by Postnikov~\cite{Pos}, is the subset of ${\rm Gr}(n+1, 2n)$ where all Pl\"ucker coordinates are nonnegative. Lam's main theorem (stated as Theorem \ref{theorem:Lammain} below) is that {$\mathcal{X}_n$} is the intersection, inside $\R\P^{\binom{2n}{n+1}-1}$, of ${\rm Gr}_{\geq 0}(n+1, 2n)$ with $\R\P^{\Cat_n -1}$.

	\begin{eg} \label{RowSpanEG}
		Continuing with our running example, the row span of the matrix 
		\[ X=
		\begin{bmatrix}
			0 & a+b+c & 0 & -a-b-c & 0 & a+b+c \\
			1 & \frac{a b}{a+b+c} & 0 & 0 & 0 & -\frac{a c}{a+b+c} \\
			0 & -\frac{a b}{a+b+c} & -1 & -\frac{b c}{a+b+c} & 0 & 0 \\
			0 & 0 & 0 & \frac{b c}{a+b+c} & 1 & \frac{a c}{a+b+c} \\
		\end{bmatrix} \]
		has Pl\"ucker coordinates as given in Example~\ref{DeltaEG}. Note that the columns appear in order $1$, $\wt{1}$, $2$, $\wt{2}$, $3$, $\wt{3}$. The first row, rescaled by $1/(a+b+c)$ is $(0,1,0,-1,0,1)$ and the sum of the other three rows is $(1,0,-1,0,1,0)$.
		%
		
		There are, of course, many matrices with the same row span. Another matrix whose Pl\"ucker coordinates are the same up to a global sign, and which hence has the same row span, is.
		\[ \wt{X} =\begin{bmatrix}
			a b c & 0 & -a b c & 0 & a b c & 0 \\
			\frac{1}{a} & 1 & \frac{1}{b} & 0 & 0 & 0 \\
			0 & 0 & -\frac{1}{b} & -1 & -\frac{1}{c} & 0 \\
			-\frac{1}{a} & 0 & 0 & 0 & \frac{1}{c} & 1 \\
		\end{bmatrix} .\]
	\end{eg}
	
	Lam's paper leaves it as a mystery how one should understand this linear slice of ${\rm Gr}(n+1, 2n)$.
	The goal of this paper is to answer that mystery.
	Let $\Omega$ be the skew-symmetric bilinear form 
	\begin{multline} \Omega((x_1, x_{\wt{1}}, x_2, x_{\wt{2}}, \dots, x_n, x_{\wt{n}}),\ (y_1, y_{\wt{1}}, y_2, y_{\wt{2}}, \dots, y_n, y_{\wt{n}})) = \\
		\sum_{i=1}^n (x_i y_{\wt{i}} - x_{\wt{i}} y_i) + \sum_{j=1}^{n-1} (x_{j+1} y_{\wt{j}} - x_{\wt{j}} y_{j+1}) + (-1)^n (x_1 y_{\wt{n}} - x_{\wt n} y_1) .  \label{OmegaFormula} \end{multline}
	
	The form $\Omega$ has a two dimensional kernel, spanned by the vectors $(0,1,0,-1,\dots,0,(-1)^{n-1})$ and $(1,0,-1,0,\dots,(-1)^{n-1},0)$. We define an $(n+1)$-dimensional subspace $X$ of $\R^{2n}$ to be \text{isotropic} for $\Omega$ if $\Omega(\vec{x}, \vec{y})=0$ for any $\vec{x}$ and $\vec{y}$ in $X$. 
	We define $\IG^{\Omega}(n+1, 2n)$ to be the space of isotropic subspaces inside ${\rm Gr}(n+1, 2n)$. 
	
	Since $\Omega$ has corank $2$, $\IG^{\Omega}(n+1, 2n)$ is isomorphic as an abstract variety to the Lagrangian grassmannian $\LG(n-1,2n-2)$. 
	However, as we will discuss in Section~\ref{sec:Karpman}, the total positivity structure on $\IG^{\Omega}(n+1,2n)$ is rather different from those previously studied for Lagrangian Grassmannians.
	
	Then our main result is 
	
	\begin{theorem} \label{thm:main}
		The intersection $\R\P^{\Cat_n -1} \cap {\rm Gr}(n+1, 2n)$ inside $\R\P^{\binom{2n}{n+1} -1}$ is $\IG^{\Omega}(n+1,2n)$.
		As a consequence, {$\mathcal{X}_n$} is the space of $(n+1)$-dimensional subspaces of $\R^{2n}$ which are both totally nonnegative and are isotropic for the form $\Omega$.
	\end{theorem}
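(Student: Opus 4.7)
The plan is to reframe the theorem as the equality of two linear subspaces of $\R\P^{\binom{2n}{n+1}-1}$: the image $\R\P^{\Cat_n-1}$ of the linear (and, by \cite{Lam}*{Proposition~5.19}, injective) map $\Lambda \mapsto \Delta$, and the linear subspace $L$ cut out by the Plücker-coordinate relations that encode $\Omega$-isotropy, so that $\IG^\Omega(n+1,2n) = \Gr(n+1,2n) \cap L$. Once $\R\P^{\Cat_n-1} = L$ is established, intersecting with $\Gr(n+1,2n)$ proves the first assertion, and intersecting further with $\Gr_{\ge 0}(n+1,2n)$ combined with Lam's Theorem~\ref{theorem:Lammain} yields the description of $\mathcal{X}_n$ as the totally nonnegative, $\Omega$-isotropic locus.

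For the inclusion $\R\P^{\Cat_n-1} \subseteq L$, I will construct, for each noncrossing partition $\sigma$ of $[n]$ with Kreweras complement $\wt\sigma$, an explicit $(n+1)$-dimensional $\Omega$-isotropic subspace $X_\sigma \subset \R^{2n}$ whose Plücker vector is exactly the image of the indicator of $\sigma$, namely the vector with a $1$ in each coordinate $(I,\wt I)$ concordant with $(\sigma,\wt\sigma)$ and $0$ elsewhere. Since $\sigma \sqcup \wt\sigma$ is a noncrossing partition of the cyclic set $\{1,\wt 1, 2,\wt 2,\dots,n,\wt n\}$ with $n+1$ blocks, I set
\[ X_\sigma = \Span\bigl( u_B : B \in \sigma \sqcup \wt\sigma \bigr), \qquad u_B = \sum_{a \in B} \epsilon_{B,a}\, e_a, \]
with signs $\epsilon_{B,a} \in \{\pm 1\}$ chosen so that every $(n+1)\times(n+1)$ minor of the associated generating matrix equals $+1$ on every concordant column set; since distinct blocks use disjoint basis vectors, each such minor is a signed monomial in the $\epsilon$'s, and a uniform sign choice dictated by the cyclic order makes every such monomial $+1$. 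I then verify $\Omega$-isotropy by showing $\Omega(u_B, u_{B'}) = 0$ for every pair of blocks: from formula~(\ref{OmegaFormula}), $\Omega(e_a, e_b)$ is nonzero only when $a$ and $b$ are cyclically adjacent in the order $1,\wt 1, 2,\wt 2,\dots,n,\wt n$, so the pairing reduces to a sum over the cyclic adjacencies crossing between $B$ and $B'$, which come in compatible pairs by the noncrossing structure and cancel for the chosen signs (the $(-1)^n$ wraparound in $\Omega$ is precisely what makes this cancellation work around the circle). Linearity of the isotropy relations then propagates to arbitrary linear combinations, so $\R\P^{\Cat_n-1} \subseteq L$.

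For the reverse inclusion I argue by dimension. By injectivity, $\dim \R\P^{\Cat_n-1} = \Cat_n - 1$. On the other side, any $X \in \IG^\Omega(n+1,2n)$ must contain the two-dimensional $\ker\Omega$ spanned by the vectors listed after (\ref{OmegaFormula}), since a corank-$2$ skew form admits an isotropic subspace of dimension $n+1$ only when that subspace contains the form's kernel; thus the quotient map $X \mapsto X/\ker\Omega$ identifies $\IG^\Omega(n+1,2n)$ with the Lagrangian Grassmannian $\LG(n-1,2n-2)$ inside the symplectic space $\R^{2n}/\ker\Omega$. At the level of Plücker vectors this realizes $L$ as the image, under wedging with a chosen generator of $\extp^2 \ker\Omega$, of the linear span of $\LG(n-1,2n-2)$ in $\extp^{n-1}\R^{2n-2}$, and that span is the fundamental $\mathrm{Sp}(2n-2)$-irreducible ``primitive'' subspace, of dimension $\binom{2n-2}{n-1} - \binom{2n-2}{n-3} = \Cat_n$. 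Hence $\dim L = \Cat_n - 1$, forcing $\R\P^{\Cat_n-1} = L$. The main obstacle is the combinatorial sign bookkeeping in the construction of $X_\sigma$: the $\epsilon_{B,a}$ must be chosen uniformly in $\sigma$ so that the Plücker identification and the isotropy cancellation hold simultaneously, and this requires matching the cyclic convention used for Kreweras complements against the cyclic-adjacency pattern of $\Omega$ — a combinatorial compatibility that the $(-1)^n$ sign in (\ref{OmegaFormula}) is exactly designed to provide.
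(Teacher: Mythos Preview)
Your proposal follows essentially the same strategy as the paper: construct for each noncrossing partition $\sigma$ an explicit isotropic $(n+1)$-plane whose Pl\"ucker vector is $f_\sigma$ (your $u_B$ are exactly the paper's $v_k$, with $\epsilon_{B,a}=(-1)^a$), deduce that $\mathcal H_n$ lies in the linear isotropy locus, and finish by a dimension count. The paper carries out the sign bookkeeping you flag as the main obstacle in two separate lemmas---one for isotropy (case analysis on the two possible adjacency patterns between blocks) and one showing all concordant $e_{I,\wt I}$ appear in $\bigwedge_k v_k$ with the same sign (via a Kreweras-pair count on the interval between two consecutive block elements). For the dimension count the paper simply cites a result of Stanley giving $\dim K=\Cat_n$, whereas you obtain the same number by passing through the identification $\IG^\Omega\cong\LG(n-1,2n-2)$ and invoking the classical formula $\binom{2n-2}{n-1}-\binom{2n-2}{n-3}$ for the primitive $\mathrm{Sp}(2n-2)$-irreducible; this is a genuinely different (and arguably more conceptual) route to the same equality.

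One small imprecision worth tightening: you define $L$ as the linear space cut out by the isotropy relations (the paper's $K=\Ker\kappa$), but your wedging argument actually computes the dimension of the \emph{linear span} of $\IG^\Omega$ inside $\extp^{n+1}\R^{2n}$, which a~priori is only contained in $L$. This does not affect the theorem---once $\mathcal H_n\subseteq\mathrm{span}(\IG^\Omega)\subseteq L$ and $\dim\mathcal H_n=\dim\mathrm{span}(\IG^\Omega)=\Cat_n$, intersecting with $\Gr(n+1,2n)$ already yields $\IG^\Omega=\Gr\cap\mathcal H_n$---but if you want the stronger statement $\mathcal H_n=L$ you should note that $K$ itself lies in $(\extp^2\ker\Omega)\wedge\extp^{n-1}W$ (since contraction by $\Omega$ is injective on the complementary summands), which then identifies $K$ with $\Ker\kappa_W$ rather than merely with the span of $\LG$.
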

	
	We pause to acknowledge that the signs in Equation~\eqref{OmegaFormula} are annoying. 
	They are necessary if we want our isotropic subspaces to have nonnegative Pl\"ucker coordinates.
	If we are willing to sacrifice this, there is a simpler sign choice. Let $D$ be the $(2n) \times (2n)$ diagonal matrix with $D_{ii} = D_{\wt{i}\ \wt{i}} = (-1)^{i-1}$. 
	Conjugating $\Omega$ by $D$ produces the simpler form
	\begin{equation} \Omega^D((x_1, x_{\wt{1}}, \dots, x_n, x_{\wt{n}}),\ (y_1, y_{\wt{1}},\dots, y_n, y_{\wt{n}})) =
		\sum_{i=1}^n (x_i y_{\wt{i}} - x_{\wt{i}} y_i) + \sum_{j=1}^{n} (x_{\wt{j}} y_{j+1} - x_{j+1} y_{\wt{j}}) .  \label{OmegaDFormula} \end{equation}
	with indices cyclic modulo $n$. The kernel of $\Omega^D$ is spanned by $(1,0,1,0,\cdots,1,0)$ and $(0,1,0,1,\cdots,0,1)$; multiplication by $D$ carries $\IG^{\Omega}(n+1,2n)$ to $\IG^{\Omega^D}(n+1,2n)$. 
	The reader may like to check that the rows of the matrices $X$ and $\wt{X}$ from Example~\ref{RowSpanEG} are isotropic for $\Omega$. 
	The reader may also enjoy computing the prettier, but not totally positive, matrices $XD$ and $\wt{X} D$ and checking that their rows are isotropic for $\Omega^D$.

	We now describe coordinates for an affine patch in $\IG^{\Omega}(n+1, 2n)$, and then describe how they can be used to give explicit formulas for our isotropic subspace in terms of the $L$ matrix and the $R$ matrix. As will be a trend, the formulas are slightly nicer for $\Omega^D$ than for $\Omega$.

	Let $V$ be an isotropic $n+1$ plane for $\Omega$. We will show (Lemma~\ref{lem:extreme_Deltas}) that $\Delta_{[n], \{ \wt{1} \}}(V) = \Delta_{[n], \{ \wt{2} \}}(V) = \cdots = \Delta_{[n], \{ \wt{n} \}}(V)$ and $\Delta_{\{ 1 \}, [\wt{n}]}(V) = \Delta_{\{ 2 \}, [\wt{n}]}(V) = \cdots = \Delta_{\{ n \}, [\wt{n}]}(V)$. Let $U_{\text{not shorted}}$ be the open set where the $\Delta_{[n],\{\wt{k}\}}$ are nonzero and $U_{\text{connected}}$ be the open set where the $\Delta_{\{ 1 \}, [\wt{n}]}$ are nonzero. The totally nonnegative points of $U_{\text{not shorted}}$ correspond to planar electrical networks; the totally nonnegative points of $U_{\text{connected}}$ correspond to cactus networks where the underlying electrical network is connected; see Lemma \ref{lem:conshort}.
	
	The spaces $U_{\text{not shorted}}$ and $U_{\text{connected}}$ are Schubert cells of maximal dimension in $\IG^{\Omega}(n+1, 2n) \cong \LG(n-1,2n-2)$, and hence isomorphic to $\R^{\binom{n}{2}}$. We now 
	describe explicit isomorphisms between these spaces and the space of symmetric $n \times n$ matrices with row and column sum $0$.
	
	\begin{theorem} \label{thm:SchubertChart}
		Every subspace in $U_{\text{not shorted}}$ can be written as the row span of a matrix of the form
		\[ 
		\begin{bmatrix}
			0&1&0&1&\cdots&0&1 \\
			1&S_{11}&0&S_{12}&\cdots&0&S_{1n} \\
			0&S_{21}&1&S_{22}&\cdots&0&S_{2n} \\
			\vdots&\vdots&\vdots&\cdots&\vdots&\vdots\\
			0&S_{n1}&0&S_{n2}&\cdots&1&S_{nn} \\
		\end{bmatrix} D. \]
		The matrix $S$ is unique up to adding a multiple of $(1,1,\ldots,1)$ to each row, and the matrix
		\[ \begin{bmatrix}
			S_{1n}-S_{11} & S_{2n}-S_{21} & \cdots & S_{nn}-S_{n1} \\
			S_{11}-S_{12} & S_{21}-S_{22} & \cdots & S_{n1}-S_{n2} \\
			\vdots & \vdots & \cdots & \vdots \\
			S_{1(n-1)} - S_{1n} & S_{2(n-1)} - S_{2n}  & \cdots & S_{n(n-1)} - S_{nn} \\
		\end{bmatrix} \]
		is a symmetric matrix whose rows and columns add to $0$. Conversely, given a symmetric matrix whose rows and columns add to $0$, we get a unique point of $U_{\text{not shorted}}$ in this manner.
		
		Similarly, subspaces in $U_{\text{connected}}$ are expressible as the row span of a matrix of the form
		\be \label{mat:t}
		\begin{bmatrix} 
			1&0&1&0&\cdots&1&0 \\
			T_{11}&1&T_{12}&0&\cdots&T_{1n}&0 \\
			T_{21}&0&T_{22}&1&\cdots&T_{2n}&0 \\
			\vdots&\vdots&\vdots&\vdots&\cdots&\vdots&\vdots\\
			T_{n1} & 0&T_{n2}&0 & \cdots & T_{nn}&1 \\
		\end{bmatrix} D \ee
		where
		\[ \begin{bmatrix}
			T_{12}-T_{11}&T_{22}-T_{21} & \cdots & T_{n2}-T_{n1} \\
			T_{13}-T_{12}&T_{23}-T_{22} & \cdots & T_{n3}-T_{n2} \\
			\vdots & \vdots & \cdots & \vdots \\
			T_{11}-T_{1n}&T_{21}-T_{2n} & \cdots & T_{n1}-T_{nn} \\
		\end{bmatrix} 
		\]
		is symmetric with rows and columns adding to $0$.
	\end{theorem}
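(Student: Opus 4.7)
The plan is to prove the statement for $U_{\text{not shorted}}$ and $S$ in detail; the argument for $U_{\text{connected}}$ and $T$ is formally identical with the two kernel vectors swapped. Throughout I will work with the twisted form $\Omega^D$, whose kernel is spanned by $u:=(0,1,0,1,\ldots,0,1)$ and $u':=(1,0,1,0,\ldots,1,0)$, and with the twisted subspace $W:=V D^{-1}$. Because $\Omega$ has rank $2n-2$, any isotropic $(n+1)$-plane for $\Omega$ is automatically maximal isotropic and so contains the entire $2$-dimensional $\ker\Omega$; correspondingly $W$ contains both $u$ and $u'$.

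First I would put $W$ into the claimed normal form. The hypothesis $V\in U_{\text{not shorted}}$ is, up to signs from the diagonal matrix $D$, the nonvanishing of the maximal minor of $W$ on columns $\{1,3,\ldots,2n-1,2k\}$, which is in turn equivalent to the projection $\pi\colon W\to\R^n$ onto the odd-indexed coordinates having rank $n$ (its kernel being therefore spanned by $u$). Lifting the standard basis of $\R^n$ through $\pi$ produces vectors $v_1,\ldots,v_n\in W$, unique modulo $u$, whose even-position entries I define to be $S_{ij}$. The matrix with rows $u,v_1,\ldots,v_n$ is then precisely the pre-$D$ matrix of the theorem, and the residual ambiguity $v_i\mapsto v_i+c_iu$ is exactly the freedom to add a constant to each row of $S$.

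The next step is to translate isotropy of $W$ into conditions on $S$. Since $u\in\ker\Omega^D$, only the pairings $\Omega^D(v_i,v_j)$ with $1\leq i,j\leq n$ can be nonzero. Because $\Omega^D$ pairs only odd-indexed with even-indexed coordinates and each $v_i$ has a single odd entry $e_{2i-1}$ plus even entries $S_{ik}$, a direct unpacking of \eqref{OmegaDFormula} gives
\[ \Omega^D(v_i,v_j) = (S_{ji}-S_{j,i-1}) - (S_{ij}-S_{i,j-1}), \]
with indices read cyclically modulo $n$. Isotropy is therefore equivalent to the symmetry of $M_{ij}:=S_{ij}-S_{i,j-1}$. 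The displayed array of the theorem has $(r,i)$-entry equal to $-M_{ir}$ (with row index read cyclically), so that array is symmetric precisely when $M$ is. Its column sums telescope to zero unconditionally, while its row sums vanish if and only if $\sum_i S_{ij}$ is constant in $j$; the latter is itself equivalent to $u'\in W$, which we have from $\ker\Omega^D\subseteq W$.

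For the converse, I would start from a symmetric matrix $C$ with zero row and column sums and reconstruct $S$ by telescoping $S_{i,r}-S_{i,r-1}=-C_{r,i}$ from the initial condition $S_{i,n}=0$; cyclic consistency is precisely the vanishing of the column sums of $C$. Running the isotropy calculation above in reverse shows that the resulting $W$ is isotropic for $\Omega^D$ and contains $u'$, so $V:=WD$ is isotropic for $\Omega$; and the pivot structure forces $\Delta_{[n],\{\wt k\}}(V)\neq 0$, placing $V\in U_{\text{not shorted}}$. The $U_{\text{connected}}$ case runs word-for-word with the roles of $u$ and $u'$ interchanged: take $u'$ as the first row, pivot on the even coordinates to produce $T$, and carry out the analogous isotropy computation to identify \eqref{mat:t}. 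The main technical obstacle is the bookkeeping: matching the naturally arising skew-difference $(S_{ji}-S_{j,i-1})-(S_{ij}-S_{i,j-1})$ to the specific cyclically shifted transpose displayed in the theorem, and tracking signs through the $D$-twist so that isotropy for $\Omega^D$ on the pre-$D$ matrix translates cleanly to isotropy for $\Omega$ on $V$.
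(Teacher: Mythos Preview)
Your argument is correct and follows the same overall architecture as the paper's: normalize an $(n+1)\times 2n$ representative by row operations using the nonvanishing Pl\"ucker coordinate, then read the isotropy condition on the remaining rows as a symmetry condition on the difference matrix. Two genuine differences are worth noting.

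First, to get the top row equal to $u=(0,1,0,1,\ldots,0,1)$ the paper invokes Lemma~\ref{lem:extreme_Deltas}, i.e.\ the equality of the $\Delta_{[n],\{\wt k\}}$ inside the linear space $K$; this implicitly uses Theorem~\ref{thm:main}. You instead observe directly that any $(n+1)$-dimensional $\Omega$-isotropic subspace must contain the full $2$-dimensional kernel of $\Omega$ (since its image in $\R^{2n}/\ker\Omega$ is isotropic for a nondegenerate symplectic form and hence at most $(n-1)$-dimensional). This makes your proof self-contained and, as a bonus, gives you $u'\in W$ for free, which you then use to verify the zero-row-sum condition that the paper leaves implicit.

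Second, for $U_{\text{connected}}$ the paper applies the cyclic shift $\Sigma$ to reduce to the $U_{\text{not shorted}}$ case already handled; the relation $T_{ij}=-S_{i(j-1)}$ drops out of the explicit matrix for $D\Sigma^{-1}D^{-1}$. Your ``swap $u$ and $u'$'' argument is equally valid, but the paper's route has the advantage of making explicit the link to duality (Lemma~\ref{lem:cyclic}), which is exactly what is needed downstream in the proof of Theorem~\ref{thm:Rmatrix_chart}. If you plan to prove that theorem as well, the cyclic-shift formulation will save you a step.
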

	
	\begin{eg} \label{eg:sym_matrices}
		In our running example,
		\[  XD=
		\begin{bmatrix}
			0 & a+b+c & 0 & a+b+c & 0 & a+b+c \\
			1 & \frac{a b}{a+b+c} & 0 & 0 & 0 & -\frac{a c}{a+b+c} \\
			0 & -\frac{a b}{a+b+c} & 1 & \frac{b c}{a+b+c} & 0 & 0 \\
			0 & 0 & 0 & -\frac{b c}{a+b+c} & 1 & \frac{a c}{a+b+c} \\
		\end{bmatrix} .\]
		Rescaling the top row to $(1,0,1,0,1,0)$ doesn't change the span, so the matrix $S$ is $ \tfrac{1}{a+b+c} \begin{sbm} ab&0&-ac \\ -ab&bc&0 \\ 0&-bc&ac \\ \end{sbm}$. The corresponding symmetric matrix is \[\frac{1}{a+b+c} \begin{bmatrix} -ab-ac&ab&ac \\ ab&-ab-bc&bc \\ ac&bc&-ac-bc \\ \end{bmatrix}.\] 
		
		Similarly,
		\[ \wt{X} D =\begin{bmatrix}
			a b c & 0 & a b c & 0 & a b c & 0 \\
			\frac{1}{a} & 1 & -\frac{1}{b} & 0 & 0 & 0 \\
			0 & 0 & \frac{1}{b} & 1 & -\frac{1}{c} & 0 \\
			-\frac{1}{a} & 0 & 0 & 0 & \frac{1}{c} & 1 \\
		\end{bmatrix} .\]
		So the matrix $T$ is $\begin{sbm} \tfrac{1}{a} & - \tfrac{1}{b} & 0 \\ 0 & \tfrac{1}{b} & -\tfrac{1}{c} \\ -\tfrac{1}{a} & 0 & \tfrac{1}{c} \end{sbm}$ and the corresponding symmetric matrix is
		\[ \begin{bmatrix}
			-\frac{1}{a}-\frac{1}{b} & \frac{1}{b} & \frac{1}{a} \\[0.1 cm]
			\frac{1}{b} & -\frac{1}{b}-\frac{1}{c} & \frac{1}{c} \\[0.1 cm]
			\frac{1}{a} & \frac{1}{c} & - \frac{1}{a}-\frac{1}{c} \\[0.1 cm]
		\end{bmatrix} .\]
	\end{eg}
	
	Using these isomorphisms, we can describe explicitly how to turn response matrices and effective resistances into points of the Grassmannian.

	\begin{theorem} \label{thm:Lmatrix_chart}
		Let $G$ be a planar electrical network with response matrix $L$. The isotropic plane in $U_{\text{not shorted}}$ corresponding to $G$ corresponds to the symmetric matrix $L$.
	\end{theorem}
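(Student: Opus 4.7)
The plan is to verify that, when the Grassmannian point representing $G$ is written in the form of Theorem~\ref{thm:SchubertChart}, the associated symmetric matrix agrees with $L$ entrywise. Since both $L$ and the associated symmetric matrix are symmetric with vanishing row and column sums, it suffices to compare the $(i, j)$ entries for $1 \le i < j \le n$. By Theorem~\ref{thm:SchubertChart} the $(i, j)$ entry of the associated symmetric matrix is $S_{j,\,i-1} - S_{j,\,i}$ (with the cyclic convention $S_{j,\,0} := S_{j,\,n}$), while by~\eqref{DeltaRatios},
\[
L_{ij} = \frac{\Delta_{[n]\setminus\{j\},\,\{\wt{i-1},\,\wt{i}\}}}{\Delta_{[n],\,\{\wt{i}\}}}.
\]
So the task reduces to computing these two Pl\"ucker coordinates for the matrix $M = M'D$ of Theorem~\ref{thm:SchubertChart}, where $M'$ denotes the matrix in that theorem prior to the multiplication by $D$.

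The key structural observation is that in $M'$, each column indexed by $k \in [n]$ is the standard basis vector $e_{k+1}$, while each column indexed by $\wt{k}$ is $v_k := e_1 + \sum_{r=1}^{n} S_{r,k}\, e_{r+1}$. Laplace expansion of a Pl\"ucker minor along the tilded columns therefore forces the remaining rows to pair with the standard basis vectors in the $[n]$-columns, leaving exactly one nonzero summand, which reduces to a small minor involving only entries of $S$. Concretely, $\Delta_{[n],\,\{\wt{i}\}}(M') = (-1)^{i}$, and for $1 < i < j \le n$ the two-column Laplace expansion of $\Delta_{[n]\setminus\{j\},\,\{\wt{i-1},\,\wt{i}\}}(M')$ along $v_{i-1}$ and $v_i$ reduces to
\[
\det\begin{pmatrix} 1 & 1 \\ S_{j,\,i-1} & S_{j,\,i} \end{pmatrix} = S_{j,i} - S_{j,i-1},
\]
multiplied by a Laplace sign equal to $(-1)^{j}$ and a complementary permutation-matrix determinant equal to $1$.

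What remains is sign bookkeeping. The conjugation by $D$ scales columns $k$ and $\wt{k}$ of $M'$ each by $(-1)^{k-1}$, so the Pl\"ucker coordinates of $M$ differ from those of $M'$ by a sign determined by the parities of the column indices in $I$ and $\wt{I}$. Collecting all contributions, the ratio $\Delta_{[n]\setminus\{j\},\,\{\wt{i-1},\,\wt{i}\}}(M)/\Delta_{[n],\,\{\wt{i}\}}(M)$ collapses to exactly $S_{j,\,i-1} - S_{j,\,i}$, matching $L_{ij}$. The edge case $i = 1$, where $\wt{i-1} = \wt{n}$ sits at the end of the ordered index set rather than next to $\wt{1}$, requires a parallel but separate Laplace computation; it yields the consistent cyclic formula $L_{1,j} = S_{j,n} - S_{j,1}$. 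The only real difficulty is this careful tracking of signs through the Laplace expansion, the reordering of columns into the canonical Pl\"ucker index order, and the $D$-conjugation.
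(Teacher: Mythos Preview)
Your proposal is correct and follows essentially the same approach as the paper: write the point in the chart of Theorem~\ref{thm:SchubertChart}, compute the two Pl\"ucker coordinates appearing in~\eqref{DeltaRatios} directly from that matrix, and conclude by matching off-diagonal entries (the diagonals then agree because both matrices have zero row sums). The paper simply asserts the values $\Delta_{[n],\{\wt{i}\}}(MD)=(-1)^{\binom{n}{2}+1}$ and $\Delta_{[n]\setminus\{j\},\{\wt{i-1},\wt{i}\}}(MD)=(-1)^{\binom{n}{2}+1}(S_{j(i-1)}-S_{ji})$ with indices taken cyclically, whereas you unpack the Laplace expansion and isolate the $i=1$ boundary case explicitly; this is additional detail, not a different argument.
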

	
	For example, the first displayed symmetric matrix in Example~\ref{eg:sym_matrices} is our response matrix $L$.
	
	\begin{theorem}  \label{thm:Rmatrix_chart}
		Let $G$ be a connected cactus network with effective resistance matrix $R$. Set $L^*_{ij} = \tfrac{1}{2} (R_{ij} + R_{i+1,j+1} - R_{i+1,j} - R_{i,j+1} )$ (indices are periodic modulo $n$). 
		The isotropic plane in $U_{\text{connected}}$ corresponding to $G$ corresponds to the symmetric matrix $L^*$.
	\end{theorem}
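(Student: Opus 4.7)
The plan is to pull the right-hand side of the formula defining $L^*$ back through Equation~\eqref{DeltaRatios} and the explicit chart in Theorem~\ref{thm:SchubertChart}, reducing the claim to a combinatorial identity in the entries $T_{k\ell}$. Let $M(T) \cdot D$ denote the matrix in~\eqref{mat:t}. For each row $k \geq 1$, the only nonzero entry among the $\wt m$-columns is a $\pm 1$ in column $\wt k$, so Laplace-expanding along the $\wt m$-columns collapses the two relevant minors to small determinants. Keeping track of the signs from the column reordering (Pl\"ucker coordinates are indexed in the cyclic order $1 < \wt 1 < 2 < \wt 2 < \cdots$) and from the diagonal matrix $D$, one obtains
\[
\Delta_{\{i\}, [\wt n]}(M(T)D) = \varepsilon, \qquad
\Delta_{\{i, j\}, [\wt n] \setminus \{\wt k\}}(M(T)D) = -\varepsilon\,(T_{k, j} - T_{k, i}),
\]
with the same universal sign $\varepsilon = (-1)^{n(n-1)/2}$. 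Substituting into~\eqref{DeltaRatios} gives $R_{ij} = \sum_{k = i}^{j - 1}(T_{k, i} - T_{k, j})$ for $1 \leq i < j \leq n$.

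Introduce the shorthand $\phi(k, \ell) := T_{k, \ell+1} - T_{k, \ell}$ (with $\ell + 1$ cyclic modulo $n$). By Theorem~\ref{thm:SchubertChart}, $\phi$ is symmetric in its two arguments, and cyclic telescoping yields $\sum_{\ell = 1}^{n} \phi(k, \ell) = 0$ for each $k$. Writing $T_{k, i} - T_{k, j} = -\sum_{\ell = i}^{j-1} \phi(k, \ell)$ transforms the formula for $R_{ij}$ into
\[
R_{ij} = -\sum_{k = i}^{j - 1} \sum_{\ell = i}^{j - 1} \phi(k, \ell), \qquad 1 \leq i < j \leq n.
\]
The theorem is now equivalent to the identity $R_{ij} + R_{i+1, j+1} - R_{i+1, j} - R_{i, j+1} = 2 \phi(i, j)$. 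Its content is an inclusion-exclusion among the four index squares $[i, j-1]^2$, $[i+1, j]^2$, $[i+1, j-1]^2$, $[i, j]^2$: letting $a, b, c, d$ be the indicator functions of the four intervals, one has $d = a + b - c$, and expansion shows that the signed function $-a(k)a(\ell) - b(k)b(\ell) + c(k)c(\ell) + d(k)d(\ell)$ vanishes except at $(k, \ell) = (i, j)$ and $(j, i)$, where it equals $1$. Pairing against the symmetric kernel $\phi$ gives $2\phi(i, j)$, so $L^*_{ij} = \phi(i, j) = T_{j, i+1} - T_{j, i}$, matching the $(i, j)$-entry of the symmetric matrix of Theorem~\ref{thm:SchubertChart}.

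The cyclic boundary case $j = n$ (when $j + 1 = 1$) is handled separately: one uses $R_{pq} = R_{qp}$ to express $R_{i+1, 1}$ and $R_{i, 1}$ via the formula of Step~2, and the analogous inclusion-exclusion, now combined with the row-sum identity $\sum_{\ell = 1}^n \phi(i, \ell) = 0$, once again gives $L^*_{in} = \phi(i, n)$. The diagonal entries are forced by the fact that both matrices have rows summing to zero.

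The main obstacle is the sign bookkeeping in Step~1: the signs from the column reordering depend on $i$, $j$, $k$ in a nontrivial way, and they only cancel when combined with the contributions from $D$ to produce the single universal sign $\varepsilon$. Once this cancellation is established, the rest of the argument is the inclusion-exclusion identity for indicators of nested intervals, a purely combinatorial step.
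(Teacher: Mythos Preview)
Your argument is correct, but it is genuinely different from the paper's. The paper never touches Equation~\eqref{DeltaRatios} or the inclusion--exclusion identity; instead it uses duality. Since $G$ is connected, the dual cactus network $G^*$ is a planar electrical network, and by Lemma~\ref{lem:cyclic} its isotropic plane is $X\cdot\Sigma\in U_{\text{not shorted}}$. Theorem~\ref{thm:Lmatrix_chart} (already proved) then says the symmetric matrix attached to $X\cdot\Sigma$ is the response matrix of $G^*$; the change of variables $T_{ij}=-S_{i(j-1)}$ built into the proof of Theorem~\ref{thm:SchubertChart} carries this over to the $T$-chart; and finally Proposition~\ref{prop:ltilde} (quoted from Kenyon--Wilson) identifies that response matrix with the second difference of $R$. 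So the paper's proof is a three-line reduction to earlier results.

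Your route trades that structural shortcut for a self-contained computation. The payoff is that you never invoke Proposition~\ref{prop:ltilde}: in effect your inclusion--exclusion step \emph{reproves} the Kenyon--Wilson identity in this context, since combining your formula $R_{ij}=-\sum_{k,\ell\in[i,j-1]}\phi(k,\ell)$ with the paper's duality argument would recover $L^*_{ij}=\tfrac12(R_{ij}+R_{i+1,j+1}-R_{i+1,j}-R_{i,j+1})$ from scratch. The cost is exactly what you flag: the sign verification in Step~1 is asserted rather than carried out. Your claimed formulas $\Delta_{\{i\},[\wt n]}(M(T)D)=\varepsilon$ and $\Delta_{\{i,j\},[\wt n]\setminus\{\wt k\}}(M(T)D)=-\varepsilon(T_{kj}-T_{ki})$ are correct (the $D$-factors and the column-reordering signs do cancel to a single $(-1)^{\binom{n}{2}}$), but if you submit this argument you should exhibit the cancellation explicitly, e.g.\ by first moving column $i$ (and $j$) to the front and then observing that the remaining $\wt m$-columns form a lower-triangular block.
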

	
	For example, the second displayed symmetric matrix in Example~\ref{eg:sym_matrices} is ${L^*}$ with respect to our effective resistance matrix $R$.

	The authors' original goal was to understand the relation between the appearance of ${\rm Gr}(n-1, 2n)$ in Lam's work and the appearance of the orthogonal Grassmannian ${\rm OG}(n-1, 2n)$ in the work of Henriques and the third author~\cite{HenriquesSpeyer}. 
	We have not yet realized this goal, but we believe there are enough ideas in this paper to be worth recording; we hope to return to our original goal in future work.
	
	\subsection{Acknowledgments} The third author first discussed these ideas with Pavel Galashin at FPSAC 2018, and is grateful to Pavel for this and many other conversations. 
	This material is based upon work supported by the National Science Foundation under Grant No. DMS-1439786 while the first author was in residence at the Institute for Computational and Experimental Research in Mathematics in Providence, RI, during the Spring 2021 semester.
	The third author was supported by the National Science Foundation under Grant No. DMS-1855135 and DMS-1854225
	
	\section{Background}

	\subsection{Cactus networks}\label{sec:cactus-nets}
	
	Let $\D$ be a disk with $n$ points labeled $1,2,\dots,n$ in clockwise order around the boundary and let $ \sigma $ be a non-crossing partition of $[n]$. A \textit{cactus with shape $\sigma$} is the topological space $\D/\sigma$ obtained by gluing the points $i$ that are in the same block of $\sigma$. $\D/\sigma$ consists of $|\tilde \sigma|$ disks glued together. A \textit{cactus network with shape $\sigma$} is a graph $G$ embedded in $\D/\sigma$ with  boundary vertices $[n]$, along with a function $c:E \ra \R_{>0}$ called \textit{conductance}. A cactus network decomposes into a union of $|\tilde \sigma|$ planar electrical networks. 
	If $\sigma$ is the partition $\{1\},\{2\},\dots,\{n\}$ consisting of all singletons, then $\D/\sigma=\D$ and $G$ is a planar electrical network. These networks have been studied extensively, see e.g.~\cites{CIM, VGV, KW11}.  
	\begin{figure}
		\centering
		{\includegraphics[width=0.65\textwidth]{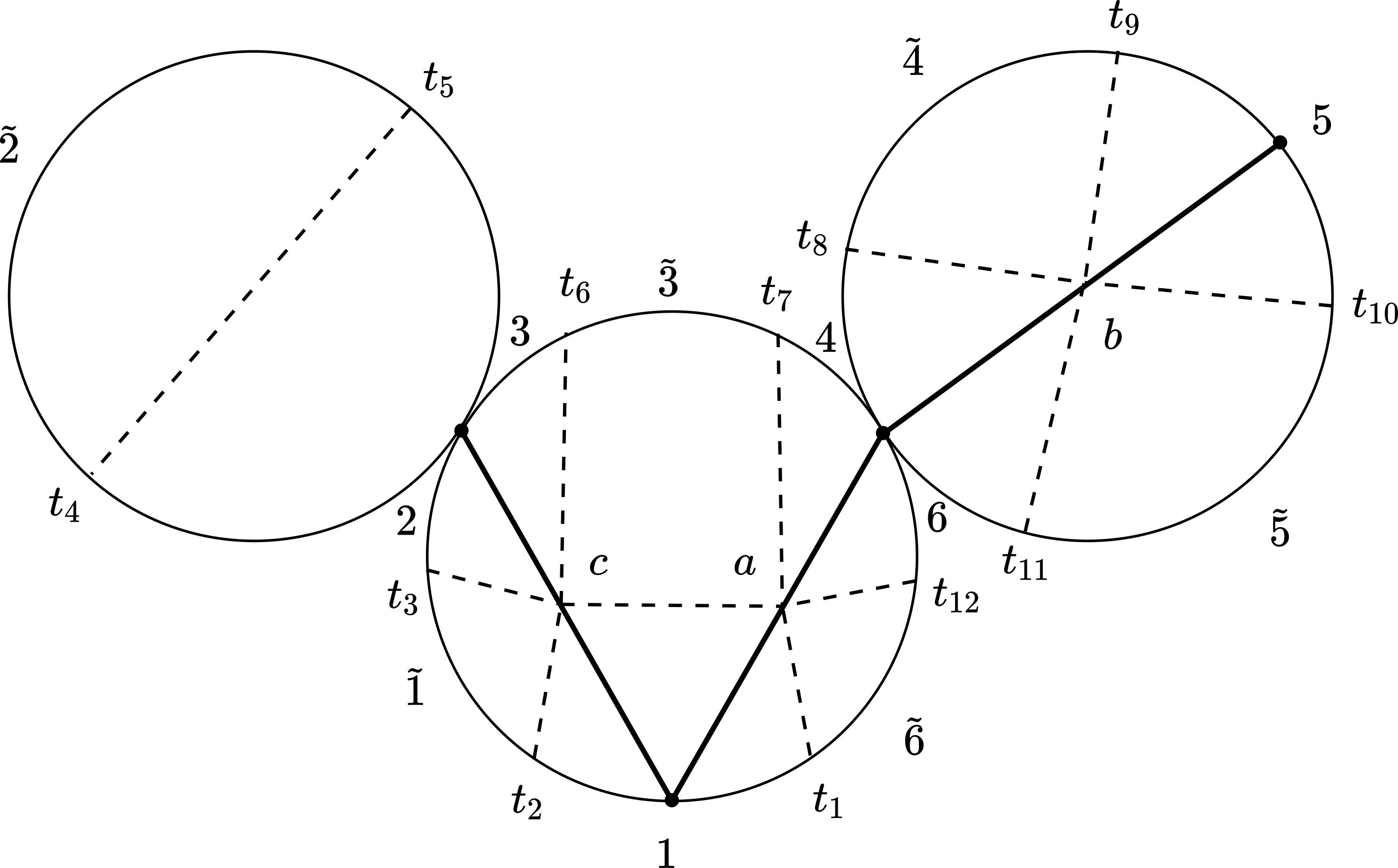}}
		\caption{A cactus network $G$ (solid) and its medial graph (dashed).}\label{fig:cactus}
	\end{figure}
	Figure \ref{fig:cactus} shows a cactus network with shape $\{1\}, \{2,3\},\{4,6\},\{5\}$. It decomposes into a union of $3$ planar electrical networks, corresponding to the blocks $\{\tilde 1,\tilde 3,\tilde 6 \},\{\tilde 2 \},\{\tilde 4, \tilde 5 \}$ of $\tilde \sigma$.

	The medial graph $G^\times$ of a cactus network $G$ is defined as follows. Place vertices $t_1,\dots,t_{2n}$ clockwise around the boundary of the cactus such that $i$ is between $t_{2i-1}$ and $t_{2i}$, and a vertex $t_e$ at the midpoint of each edge $e$ of $G$. For $e,e' \in E$, draw an edge between $t_e$ and $t_{e'}$ in $G^\times$ if there is a face of $G$ around which $e $ and $e'$ occur consecutively.
	Draw an edge from $t_{2i-1}$ (respectively $t_{2i}$) to $t_e$ if $e$ is the first (respectively last) edge in clockwise order around the boundary face of $G$ containing $t_{2i-1}$ (respectively $t_{2i}$).
	If $i$ is an isolated vertex, draw an edge between $t_{2i-1}$ and $t_{2i}$.
	
	Note that each boundary vertex has degree $1$ and each interior vertex $t_e$ has degree $4$. A \textit{medial strand} in $G^\times$ is a path in $G^\times$ that starts at a boundary vertex $t_i$, follows the only edge incident to it, and then at each interior vertex of degree $4$, follows the edge opposite to the one used to arrive at it. There are $n$ medial strands, and they give rise to a matching $\tau(G)$ on $[2n]$:
	$$
	\tau(G):=\{\{i,j\} \subset [2n]: \text{there is a medial strand in $G^\times$ from $t_i$ to $t_j$} \}.
	$$
	The matching $\tau(G)$ is called the \textit{medial pairing associated with} $G$. Let $P_n$ denote the set of matchings on $[2n]$.
	
	\begin{remark}
		Every $\tau\in P_n$ is the medial pairing for some cactus network $G$.  Some of these matchings are not medial pairings for  planar networks.  For example $\{4,5\}$ could not be a part of $\tau(G)$ for any planar network $G$, but we see in Figure~\ref{fig:cactus} that this can happen in a cactus network.  Thus, cactus networks can be thought of as a natural generalization of circular planar networks where all medial pairings are allowed.
	\end{remark}
	
	A graph $G$ embedded in a cactus is said to be \textit{minimal} if the medial strands in $G^\times$ have no self intersections and there are no bigons (two medial strands that intersect each other twice).
	
	\begin{example}
		The cactus network in Figure \ref{fig:cactus}  is minimal and has medial pairing
		$$
		\{\{1,7\},\{2,6\},\{3,{12}\},\{4,5\},\{8,{10}\},\{9,{11}\}\}.
		$$
	\end{example}
	
	\subsection{The space of cactus networks.}

	\begin{figure}
		\centering
		{\includegraphics[width=0.5\textwidth]{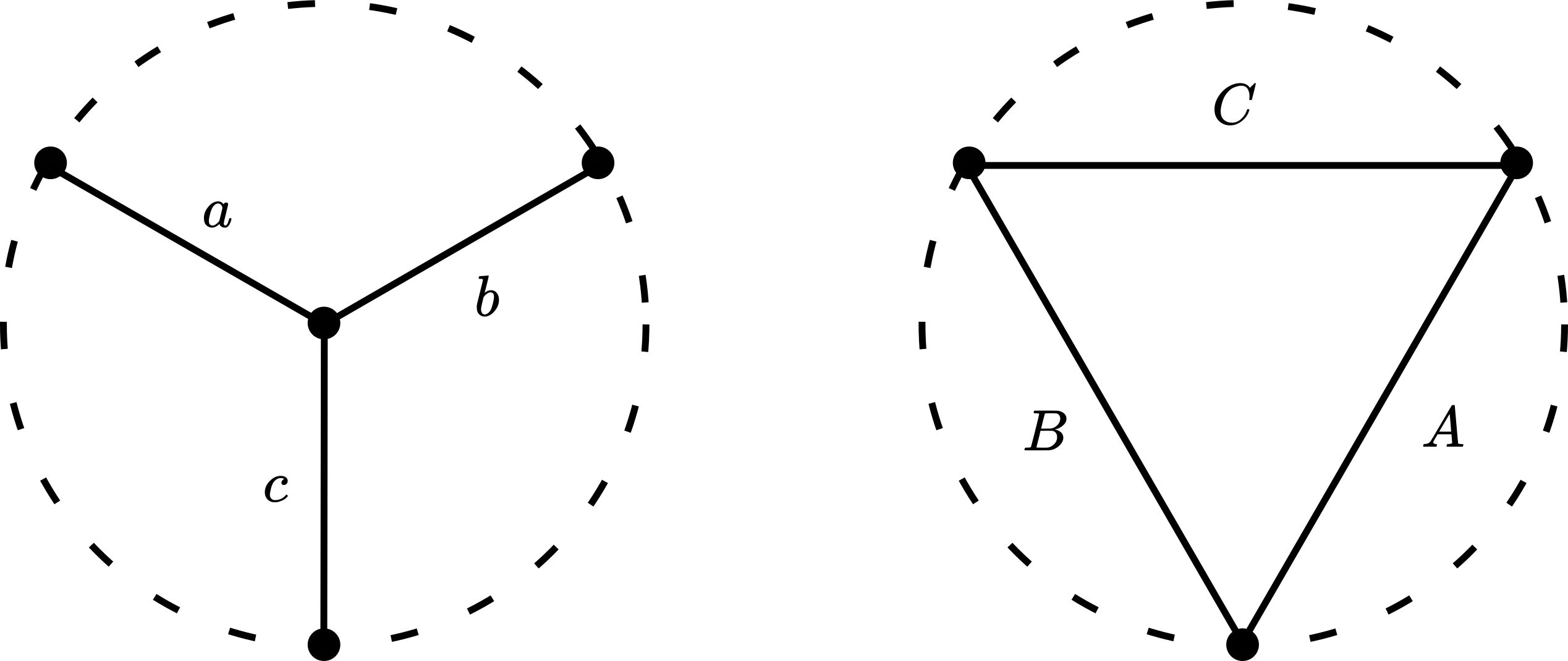}}
		\caption{The Y-$\Delta$ move.}\label{fig::ydelta}
	\end{figure}

	There is a local move on cactus networks called the Y-$\Delta$ move (see Figure \ref{fig::ydelta}). Two graphs $G_1$ and $G_2$ embedded in a cactus are \textit{topologically equivalent} if there is a sequence of Y-$\Delta$ moves such that $G_1 \mapsto G_2$. 
	\begin{proposition}[\cite{Lam18}*{Proposition 4.2}]\la{lam1}
		The function assigning to each cactus network with boundary vertices $[n]$ its medial pairing gives a bijection
		$$
		\{\text{Minimal cactus graphs}\}/\text{topological equivalence} \xrightarrow[]{\sim} P_n.
		$$
	\end{proposition}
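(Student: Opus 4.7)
The plan is to prove three properties: well-definedness of the map on Y-$\Delta$ equivalence classes, surjectivity, and injectivity.

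For well-definedness, I would analyze a single Y-$\Delta$ move locally. In either configuration, exactly three medial strands pass through the small disk on which the move takes place; these strands enter and leave that disk at six fixed midpoints along its boundary, and the pairing among these six endpoints is the same before and after the move (the move simply pushes the triple point of strands from one side to the other). Therefore $\tau(G)$ is preserved, and the assignment $G \mapsto \tau(G)$ descends to Y-$\Delta$ equivalence classes.

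For surjectivity, given $\tau \in P_n$, I would realize it as a strand arrangement in the disk: a collection of $n$ arcs connecting the boundary points $t_1,\ldots,t_{2n}$ according to $\tau$, in general position and chosen to be minimal (no self-crossings, no bigons). Such an arrangement exists by standard facts about pseudoline arrangements in the disk. From such a strand arrangement one reconstructs a cactus graph by two-coloring the faces and placing a vertex in each face of one color, with an edge across each transverse crossing of strands; the non-crossing partition $\sigma$ describing the shape of the cactus is read off from which boundary midpoints land in the same boundary face after the arrangement is drawn. By construction, the resulting minimal cactus graph has medial pairing $\tau$.

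For injectivity, the idea is that two minimal strand arrangements realizing the same matching on $[2n]$ are related by a sequence of triple-point moves (Reidemeister-III-type moves on pseudolines), and each such move on strands corresponds precisely to a Y-$\Delta$ move on the graph obtained by dualizing the face coloring. The main obstacle is the combinatorial statement about strand arrangements: that minimality together with $\tau$ pins down the arrangement up to triple-point moves. The cactus setting adds the wrinkle that strands may cut off small boundary regions (corresponding to pairs of the form $\{2i, 2i+1\}$ in $\tau$), so one must verify that the shape $\sigma$ of the cactus is determined by $\tau$ and that the triple-point moves do not alter it — this is where the argument genuinely extends the planar theory treated by Curtis--Ingerman--Morrow and de Verdi\`ere--Gitler--Vertigan. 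Once the strand-level statement is in hand, the translation to Y-$\Delta$ moves on $G$ is a direct local check.
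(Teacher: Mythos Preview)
The paper does not give its own proof of this proposition: it is quoted as a background result from Lam~\cite{Lam18}*{Proposition~4.2}, and no argument is supplied here. So there is nothing in the present paper to compare your attempt against.

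That said, your outline is the standard one and is essentially what Lam carries out (extending the planar case of Curtis--Ingerman--Morrow and de~Verdi\`ere--Gitler--Vertigan). The three steps you isolate---invariance of $\tau$ under Y--$\Delta$, construction of a minimal cactus graph from a minimal pseudoline realization of a given $\tau$, and the fact that any two minimal realizations of the same matching are connected by triangle (Reidemeister~III) moves---are exactly the ingredients used in the literature. The one place your sketch is genuinely thin is injectivity: you correctly flag that the crux is the pseudoline statement ``minimal arrangements with the same endpoint matching are related by triangle moves,'' but you do not indicate how to prove it. In the planar case this is a nontrivial theorem (it is the content of the lens-reduction arguments in \cite{CGV} and \cite{CIM}); for cactus networks one must also check, as you note, that the shape $\sigma$ is determined by $\tau$ and is stable under triangle moves. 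If you were writing this proof in full you would need either to supply that argument or to cite it; as a proof \emph{sketch} matching what the cited references do, your proposal is accurate.
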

	
	Let $\tau \in P_n$. For a minimal graph $G$ embedded in a cactus with $\tau(G)=\tau$, let 
	$$
	\mathcal R_G:=\{c:E \ra \R_{>0}\},
	$$
	be the space of minimal cactus networks with underlying graph $G$. A Y-$\Delta$ move $G_1 \mapsto G_2$ induces a homeomorphism $\mathcal R_{G_1} \ra \mathcal R_{G_2}$ given in the notation of Figure \ref{fig::ydelta} by
	$$
	A=\frac{bc}{a+b+c}, \quad B=\frac{ac}{a+b+c}, \quad C=\frac{ab}{a+b+c}.
	$$
	
	Two minimal cactus networks $(G_1,c_1)$ and $(G_2,c_2)$ are said to be \textit{electrically equivalent} if there is a sequence of Y-$\Delta$ moves $(G_1,c_1) \mapsto (G_2,c_2)$. Gluing the $\mathcal R_G$ for all minimal $G$ with $\tau(G)=\tau$ using the bijections induced by Y-$\Delta$ moves, we obtain the space $\mathcal R_\tau$ parameterizing electrical equivalence classes of minimal cactus networks with medial pairing $\tau$. Let $\mathcal R_n$ denote the space of electrical equivalence classes of minimal cactus networks with boundary vertices $[n]$. By Proposition \ref{lam1}, $\mathcal R_n$ has the stratification 
	$$
	\mathcal R_n=\bigsqcup_{\tau \in P_n}\mathcal R_\tau.
	$$
	
	Lam \cite{Lam18} uses the grove measurements $\Lambda_\sigma$ to identify $\mathcal R_n$ with a closed (in the Euclidean topology) subset of $\R \P^{\Cat_n-1}$. We use the induced topology to make $\mathcal R_n$ a topological space 
	{, which we call the \emph{space of cactus networks}}.

	\subsection{Response and effective resistance matrices}\label{sec:harm-iso}
	Suppose $G$ is a cactus network with shape $\sigma$. {Let $\Gamma$ be obtained from $G$ be relabeling the boundary vertices by blocks of $\sigma$.  This means that vertices identified by $\sigma$ have only one label.} 
	We call the vertices of $\Gamma$ corresponding to blocks of $\sigma$ \textit{boundary vertices}. The \textit{Laplacian} on $\Gamma$ is the linear operator
	$
	{{\mathcal{L}}}:\R^{\text{vertices of }\Gamma} \ra \R^{\text{vertices of }\Gamma}
	$
	defined by 
	$$
	({\mathcal{L}} f)(u) := \sum_{\text{edges } \{u,v\}  } c(\{u,v\}) (f(u)-f(v))
	$$
	where the sum is over all vertices $u$ that are incident to $v$. In the standard basis of $\R^{\text{vertices of }\Gamma}$, ${\mathcal{L}}$ is represented by the symmetric matrix
	$$
	{\mathcal{L}}_{u,v}=\begin{cases}
		\sum_{\text{edges }\{v,v'\} } c(\{v,v'\}) &\text{ if } u=v,\\
		-c(\{u,v\}) &\text{ if $u \neq v$ and $\{u,v\} \text{ is an edge of }\Gamma $},\\
		0 &\text{ if $u \neq v$ and $\{u,v\}\text{ is not an edge of }\Gamma $}.
	\end{cases}
	$$
	
	A function $f$ on the vertices of $\Gamma$ is called a \textit{harmonic} function if $(\mathcal{L} f) (u)=0$ for all non-boundary vertices $u$ of $\Gamma$. Given a function $F$ on the boundary vertices of $\Gamma$, the \textit{Dirichlet problem} asks for a harmonic function $f$ that agrees with $F$ on the boundary vertices. The Dirichlet problem has a unique solution and the function $f$ is called the \textit{harmonic extension} of $F$.
	
	Now we can define the electrical terminology used in the introduction. A \textit{voltage} $V$ on $G$ is a harmonic function. The \textit{current} $J$ associated to $V$ is a function on the directed edges of $\Gamma$ defined by $J(u,v):=c(\{u,v\})(V(v)-V(u))$, where $(u,v)$ denotes an edge of $\Gamma$ directed from vertex $u$ to vertex $v$. Note that $J$ is antisymmetric: $J(v,u)=-J(u,v)$. The quantity $(\mathcal{L} V)(u)$ is the net current flowing into the vertex $u$ when the vertices of $\Gamma$ are held at voltages given by $V$. For a boundary vertex $v$ of $\Gamma$, let $J_u:=\sum_{ v \text{ incident to }u}J(u,v)$ denote the total current flowing out vertex $u$. Define $L:\R^{\text{boundary vertices of }\Gamma} \ra \R^{\text{boundary vertices of }\Gamma}$ to be the map that sends a vector $(V_u)$ of voltages of boundary vertices to the vector $(J_u)$ of currents flowing out of each boundary vertex when the vertices are held at the voltages determined by the harmonic extension $V$ of $(V_u)$.  This map is linear and the matrix $L$ is called the \textit{response matrix} of $\Gamma$. The response matrix is symmetric, has rows and columns that add up to zero, and can be explicitly constructed as the Schur complement of the Laplacian with respect to the square submatrix of $\mathcal{L}$ corresponding to the non-boundary vertices of $\Gamma$ multiplied by $-1$.
	
	We now define the \textit{effective resistance matrix} $R$ of a connected cactus network $G$. For boundary vertices $i,j$ of $G$, let $u,v$ denote the corresponding vertices of $\Gamma$.  
	Let $V$ denote a solution to $LV=e_u-e_v$, where $e_u,e_v$ are standard basis vectors of $\R^{\text{boundary vertices of }\Gamma}$. Note that although $L$ is not invertible, since $\Gamma$ is connected, the cokernel of $L$ is spanned by the vector $(1,1,\dots,1)$ and therefore $e_v-e_u \in (1,1,\dots,1)^\perp$ is in the image of $L$. Define $R_{ij}:=V(v)-V(u)$, the voltage difference between $v$ and $u$ so that one unit of current flows from $u$ to $v$. Although  $V$ is only defined modulo an additive constant, $R_{ij}$ is well defined. Notice that $R$ is a symmetric matrix with zeroes on the diagonal.
	
	{Kirchhoff's formulas (\ref{ResponseFormula}) and (\ref{ResistanceFormula}) express the matrices $L$ and $R$ in terms of ratios of grove measurements.
	}
	\begin{eg}
		For the cactus network $G$ in Figure \ref{fig:cactus}, $\Gamma$ is a planar electrical network with four boundary vertices $\{1\},\{2,3\},\{4,6\},\{5\}$ and no non-boundary vertices. We have
		$$
		-\mathcal{L}=L=\begin{bmatrix}
			-a-c & c& a& 0\\
			c & -c &  0&0\\
			a&0&-a-b&b\\
			0&0&b&-b
		\end{bmatrix}.
		$$
		Let us compute $R_{25}$. We have $u=\{2,3\},v=\{5\}$. The voltage $V=(0,-\frac{1} c,\frac 1 a,\frac {a+b}{ab})$ satisfies $LV=e_v-e_u$, so $R_{25}=\frac 1 a+\frac 1 b+\frac 1 c.$ We have $\Lambda_{\{1,2,3,4,5,6\}}=abc$ and there are three non-crossing partitions concordant with $\{2,5\}$ that have nonzero contribution to the sum in (\ref{LambdaRatios}):
		$$
		\Lambda_{\{2,3\},\{1,4,5,6\}}=ab, \quad \Lambda_{\{1,2,3\},\{4,5,6\}}=bc, \quad \Lambda_{\{1,2,3,4,6\},\{5\}}=ac.
		$$
		Therefore we get $$
		\frac{\Lambda_{\{2,3\},\{1,4,5,6\}}+ \Lambda_{\{1,2,3\},\{4,5,6\}}+ \Lambda_{\{1,2,3,4,6\},\{5\}}}{\Lambda_{\{1,2,3,4,5,6\}}}=\frac{ab+bc+ac}{abc}=\frac 1 a+\frac 1 b+\frac 1 c=R_{25},
		$$
		verifying (\ref{LambdaRatios}).
	\end{eg}

	\subsection{Lam's map $\mathcal{T}$}\label{sec:Lam-map}
	
	Let $V$ be the vector space with basis $e_1, e_{\wt{1}},\dots,e_n,e_{\wt{n}}$. For $I \sqcup \wt{I} \subseteq [n] \sqcup [\wt{n}]$, we put $e_{I, \wt{I}} = \Alt_{i \in I \sqcup \wt{I}} e_i$, where the wedge product is taken in the order induced on $I \sqcup \wt{I}$ from the total order $1 < \wt{1} < 2 < \wt{2} < \cdots < n < \wt{n}$. Given a Kreweras pair  $(\sigma, \wt{\sigma})$, let 
	\begin{equation} f_{\sigma} = \sum_{(I, \wt{I}) \ \text{concordant with}\ (\sigma, \wt{\sigma})} e_{I, \wt{I}} . \label{eq:f_sigma_defn} \end{equation}
	Lam~\cite{Lam18} defines a map $\mathcal{T}:\R^{\Cat_n}\to\Alt^{n+1} V$ by $(\Lambda_{\sigma})\mapsto\sum_{\sigma} \Lambda_{\sigma} f_{\sigma}$.
	Since $V\cong\R^{2n}$, we can identify $\P(\Alt^{n+1} V)$ and $\R\P^{\binom{2n}{n+1}-1}$ using the basis $e_{I,\wt I}$. From here on we will use these spaces interchangeably.
	
	It is easy to check that $\mathcal{T}$ is injective \cite{Lam18}*{Proposition 5.19}. So the projectivization of the image of $\mathcal{T}$, or $\P(\mathcal{T}(\R^{\Cat_n}))$ is a linear subspace isomorphic to $\R\P^{\Cat_n-1}$.  {We call this subspace $\mathcal{H}_n$.}
	\begin{theorem}[\cite{Lam18}] \label{theorem:Lammain}
		Lam's map induces a map from $\mathcal R_n$ to $\P(\Alt^{n+1} V)$ taking a cactus network with grove measurements $\Lambda_\sigma$ to $[\sum_{\sigma} \Lambda_{\sigma} f_{\sigma}]$. This map is a homeomorphism of $\mathcal R_n$ with ${\mathcal{X}_n:=\mathcal{H}_n}\cap \Gr_{\geq 0}(n+1,2n) {\subseteq}\R\P^{\binom{2n}{n+1}-1}$.
	\end{theorem}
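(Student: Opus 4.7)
The plan is to break the theorem into four pieces: well-definedness on electrical equivalence classes, showing the image lies in $\Gr_{\geq 0}(n+1,2n)$, injectivity, and surjectivity onto $\mathcal{X}_n$. With these in hand, the homeomorphism statement follows because the map is continuous (polynomial in the $\Lambda_\sigma$), $\mathcal{R}_n$ is Hausdorff and locally compact by construction, and the Plücker coordinates give a continuous inverse on each stratum.

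Well-definedness and nonnegativity are the easy steps. Electrically equivalent cactus networks have grove measurements $\Lambda_\sigma$ that are proportional across equivalence, so after projectivization the map $[\Lambda_\sigma]\mapsto [\sum_\sigma \Lambda_\sigma f_\sigma]$ depends only on the class. Since each $f_\sigma=\sum_{(I,\wt I) \text{ concordant}}e_{I,\wt I}$ has coefficients in $\{0,1\}$, and all $\Lambda_\sigma\geq 0$, every Plücker coordinate $\Delta_{I,\wt I}$ of the image is a sum of nonnegative terms. Injectivity reduces to linear independence of $\{f_\sigma\}$ in $\Alt^{n+1}V$; I would prove this by finding, for each noncrossing partition $\sigma$, a distinguished $(I,\wt I)$ which is concordant with $(\sigma,\wt\sigma)$ alone, so that the $f_\sigma$ form a triangular system with distinct leading terms under a suitable ordering on subsets.

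The main obstacle is proving that the image actually lies in the Grassmannian, i.e., that the collection $(\Delta_{I,\wt I})$ satisfies the Plücker relations. The strategy is to exhibit, for each cactus network, an explicit $(n+1)$-plane with these Plücker coordinates. On the open stratum where $\Delta_{[n],\{\wt k\}}\neq 0$ (networks that are not shorted), Theorem~\ref{thm:SchubertChart} together with Theorem~\ref{thm:Lmatrix_chart} provides the candidate: build the matrix with $S=L$ and take its row span. One then checks by direct expansion, using Kirchhoff's matrix-tree theorem, that the maximal minor of this matrix indexed by columns $I\sqcup \wt I$ equals $\sum_{(\sigma,\wt\sigma)\text{ concordant with }(I,\wt I)}\Lambda_\sigma$. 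This is the combinatorial core: each summand on the grove side matches a specific term in the minor expansion via a sign-reversing bijection. For cactus networks with shorted boundary, I would use the stratification $\mathcal{R}_n=\bigsqcup_\tau \mathcal{R}_\tau$ together with a limiting argument: shorted networks arise as boundary limits of unshorted ones (by sending certain conductances to infinity), and since $\Gr(n+1,2n)$ is closed in $\R\P^{\binom{2n}{n+1}-1}$, membership in the Grassmannian is preserved.

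For surjectivity onto $\mathcal{X}_n$, I would invoke Postnikov's cell decomposition of $\Gr_{\geq 0}(n+1,2n)$ indexed by positroids, and match cells to medial pairings. Given a point $X\in\mathcal{H}_n\cap \Gr_{\geq 0}(n+1,2n)$, its matroid of nonzero Plücker coordinates constrains the combinatorics to correspond to some $\tau\in P_n$. A dimension count then shows that the restriction of Lam's map to $\mathcal{R}_\tau$ is a continuous injection into the matching positroid cell whose dimensions agree; since the target cell is an open ball and the map is continuous and injective, invariance of domain (or the explicit Schubert chart of Theorem~\ref{thm:SchubertChart} on the top cell, then induction on cell dimension) gives surjectivity stratum by stratum. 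Combining strata yields surjectivity onto $\mathcal{X}_n$, and the homeomorphism statement then follows from the general fact that a continuous bijection between CW-complexes that is a homeomorphism on each cell is a homeomorphism.
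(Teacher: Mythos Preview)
The paper does not prove this theorem; it is quoted from Lam~\cite{Lam18} as background and used as a black box. So there is no ``paper's own proof'' to compare against here.

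That said, your proposal has a genuine circularity problem. You invoke Theorems~\ref{thm:SchubertChart} and~\ref{thm:Lmatrix_chart} to show the image lands in $\Gr(n+1,2n)$, but Theorem~\ref{thm:Lmatrix_chart} already presupposes Lam's theorem: its statement speaks of ``the isotropic plane in $U_{\text{not shorted}}$ corresponding to $G$,'' which only makes sense once we know Lam's map sends $G$ to a point of the Grassmannian. Likewise, the proof of Theorem~\ref{thm:Lmatrix_chart} in Section~4 uses Lemma~\ref{lem:conshort}, which in turn invokes Lam's map as a map to $\Gr_{\geq 0}(n+1,2n)$. So you cannot cite those results to establish Theorem~\ref{theorem:Lammain}.

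The underlying idea---write down an explicit $(n+1)\times 2n$ matrix from the response matrix and verify its maximal minors agree with $\sum_{(\sigma,\wt\sigma)\text{ concordant}}\Lambda_\sigma$ via matrix-tree combinatorics---is sound and is essentially how Lam proceeds, but you must define that matrix directly rather than import it from Theorem~\ref{thm:Lmatrix_chart}. Your surjectivity sketch is also too thin: matching the positroid stratification of $\Gr_{\geq 0}(n+1,2n)$ intersected with $\mathcal{H}_n$ to the medial-pairing stratification of $\mathcal{R}_n$ requires real work (Lam devotes substantial effort to this), and invariance of domain on cells does not by itself give a global homeomorphism without knowing the cell closures match up.
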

	
	{Note that Lam calls $\mathcal{X}_n$ the space of cactus networks.  We reserve this name for $\mathcal{R}_n$, even though the spaces are homeomorphic.}

	\subsection{Duality and cyclic symmetry}\label{sec:dual}
	The dual cactus network $ G^*$ associated to $G$ is defined as follows. Since the medial graph $G^\times$ is disjoint from $[n]$, we can identify it with its preimage in $\D$ under the quotient map $\D \ra \D/\sigma$. $\D \setminus G^\times $ consists of two types of regions corresponding to vertices of $G$ and faces of $G$. We place a vertex of $G^*$ in every region corresponding to a face. For every vertex $t_e$ of $G^\times$, we connect the two faces incident to $e$ by a dual edge $e^*$ in $G^*$ and assign it conductance $c^*(e^*)=\frac{1}{c(e)}$. We get a partition $\sigma^*$ whose parts consist of $\wt i$ that are incident to the same face of $G^*$. Passing to the quotient $\D/\sigma^*$, we get the dual cactus network $G^*$ embedded in $\D/\sigma^*$ with boundary vertices $[\wt n]$. Let $s$ denote the operation of cyclically shifting the labels of the boundary points $1<\wt 1 < 2 < \wt 2 \dots < n < \wt n$ clockwise by one step (so that $s(1)=\wt n, s(\wt 1)=1$ etc). Applying $s$, $G^*$ becomes a cactus network in $\D/s(\sigma^*)$ with boundary vertices $[n]$.
	\begin{figure}
		\centering
		{\includegraphics[width=0.25\textwidth]{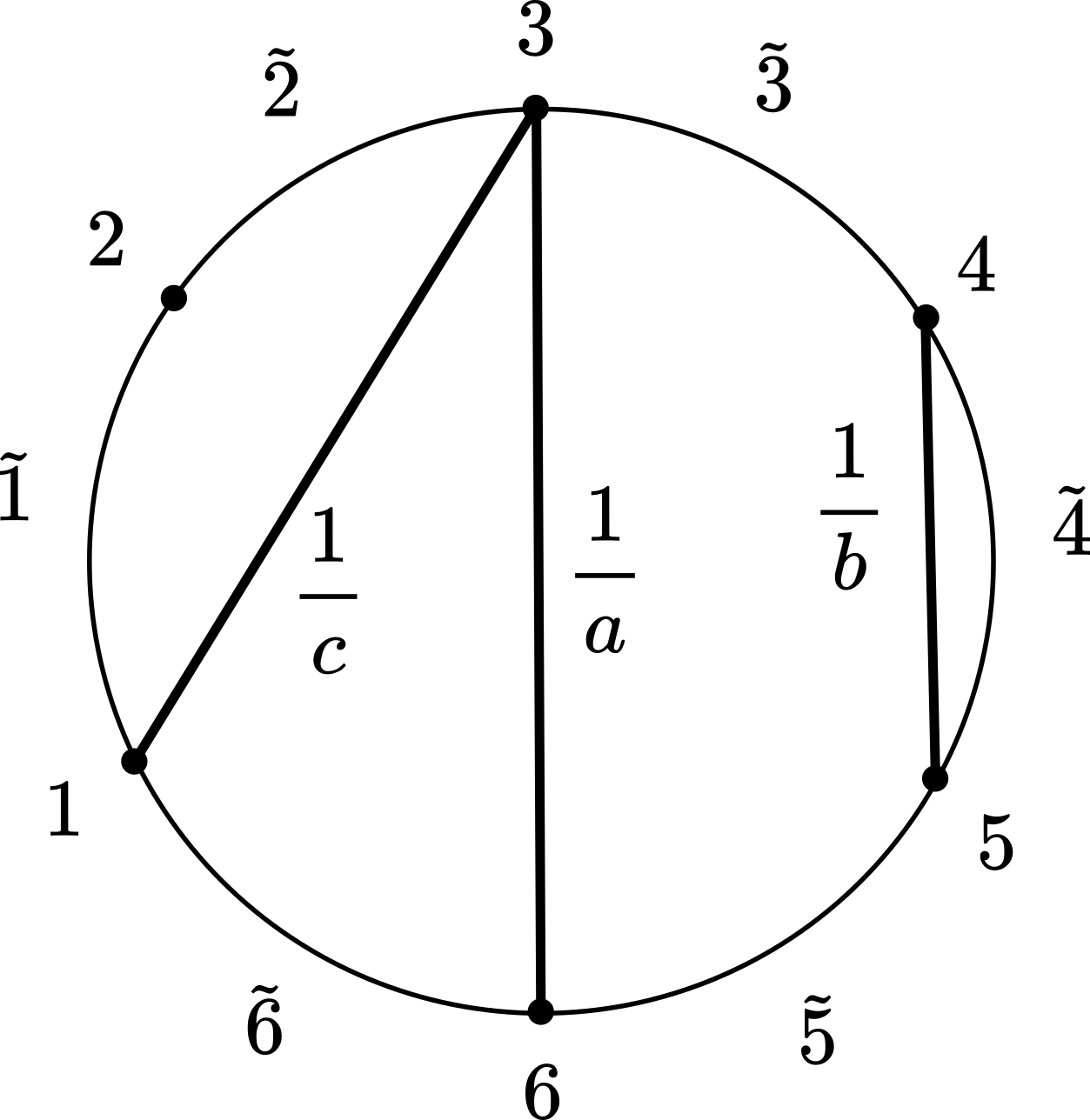}}
		\caption{The dual of the cactus network in Figure \ref{fig:cactus}.}\label{fig:dual}
	\end{figure}
	\begin{example}
		Figure \ref{fig:dual} shows the dual cactus network of the cactus network in Figure \ref{fig:cactus}.
	\end{example}
	
	\begin{remark}
		$\sigma^*$ is not the same as $\wt \sigma$. For example, for the planar electrical network on $[2]$ with an edge between $1$ and $2$, we have $\sigma=\{1\},\{2\}$, $\wt \sigma=\{\wt 1,\wt 2\}$ and $\sigma^*=\{\wt 1\},\{\wt 2\}$.  
	\end{remark} 
	A grove $F$ in $G$ corresponds to a dual grove $F^*$ in $G^*$ which consists of duals of edges not in $F$. We have $w(F)=w(F^*) \prod_{e \text{ edge of }G} c(e)$ and $\sigma(F^*)=s(\wt{\sigma(F)})$. 
	Therefore in $\R\P^{\Cat_n-1}$, duality is the homeomorphism given by $\Lambda_{s(\wt \sigma)} \mapsto \Lambda_\sigma$. Therefore duality is a continuous symmetry of $\mathcal R_n$, rather than merely piecewise continuous as suggested by the definition.
	
	We now explain how Lam's map relates to duality. Let ${\Sigma}:V \ra V$ denote the linear operator $(e_1,e_{\wt 1},\dots,e_{n},e_{\wt n}) \mapsto ((-1)^n e_{\wt n},e_1,e_{\wt 1},\dots,e_{\wt{n-1}},e_n)$. We define the \textit{cyclic shift operator} on $\Gr_{\geq 0}(n+1,2n)$ mapping $X$ to $X \cdot {\Sigma}$. 
	
	\begin{lemma}\la{lem:cyclic}
		Under Lam's map, duality becomes the cyclic shift operator.
	\end{lemma}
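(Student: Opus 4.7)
The plan is to reduce the statement to a purely combinatorial identity about the vectors $f_\sigma$ defined in (\ref{eq:f_sigma_defn}), and then verify that identity by a direct bijection on concordant pairs, tracking signs.

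First I would unwind the definitions on the network side. Given a cactus network $G$, the discussion at the end of Section~\ref{sec:dual} shows that for each grove $F$ of $G$, the dual grove $F^*$ satisfies $w(F)=w(F^*) \prod_e c(e)$ and $\sigma(F^*)=s(\wt{\sigma(F)})$. Summing over groves, the dual grove measurements $\Lambda^*$ of $G^*$ satisfy
\[
\Lambda^*_{s(\wt\sigma)} \;=\; \Bigl(\prod_e c(e)\Bigr)^{-1}\, \Lambda_\sigma
\]
for every noncrossing partition $\sigma$ of $[n]$. Since this is a global nonzero rescaling, it becomes trivial in $\R\P^{\Cat_n-1}$, and the claim to prove is therefore $\Sigma \circ \mathcal{T}(\Lambda) = \mathcal{T}(\Lambda^*)$ up to a global scalar. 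Because $\mathcal{T}$ and $\Sigma$ are linear, it suffices to establish the identity
\be\label{eq:key_fsigma}
\Sigma(f_\sigma) \;=\; f_{s(\wt\sigma)}
\ee
for every noncrossing partition $\sigma$ of $[n]$; summing the substitution above then gives $\Sigma(\mathcal{T}(\Lambda))=\sum_\sigma \Lambda_\sigma f_{s(\wt\sigma)} = \sum_\tau \Lambda^*_\tau f_\tau \cdot \prod_e c(e)$, as required.

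Next I would verify (\ref{eq:key_fsigma}). The bijective step: because cyclically rotating a maximal noncrossing partition of $[n]\sqcup[\wt n]$ produces another such partition with the same block count, Kreweras complementation commutes with the cyclic shift $s$ in the sense that the Kreweras complement of $s(\wt\sigma)$ is $s(\sigma)$. Consequently $(I,\wt I)$ is concordant with $(\sigma,\wt\sigma)$ if and only if $\bigl(s(\wt I),\,s(I)\bigr)$ is concordant with $\bigl(s(\wt\sigma),\,s(\sigma)\bigr)$; this is simply the statement that $s$ takes each block of $\sigma$ to a block of $s(\sigma)$ (and similarly for $\wt\sigma$), so concordance is preserved term by term. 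This produces a bijection between the index sets of the two sides of (\ref{eq:key_fsigma}).

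The sign check: I would compute $\Sigma(e_{I,\wt I})$ directly from the definition of $\Sigma$. If $1\notin I$, then every basis vector $e_i$ appearing in $e_{I,\wt I}$ is sent by $\Sigma$ to $e_{s(i)}$ without a sign, and $s$ preserves the relative order of all the indices in $I\sqcup\wt I$, so $\Sigma(e_{I,\wt I})=e_{s(\wt I),s(I)}$ on the nose. If $1\in I$, then $1$ is the minimum of $I\sqcup\wt I$ but $s(1)=\wt n$ is the maximum of $s(I)\sqcup s(\wt I)$, so the reordering into standard order contributes a sign of $(-1)^{n}$ (moving one element past the other $n$ entries), which exactly cancels the $(-1)^n$ in the formula $\Sigma(e_1)=(-1)^n e_{\wt n}$. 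In both cases $\Sigma(e_{I,\wt I}) = e_{s(\wt I),s(I)}$, and summing over $(I,\wt I)$ concordant with $(\sigma,\wt\sigma)$ gives (\ref{eq:key_fsigma}).

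I expect the sign computation in the last paragraph to be the main obstacle: the definition of $\Sigma$ is tailored precisely to make these signs cancel, so one has to check carefully that the $(-1)^n$ built into $\Sigma$ is exactly what is needed to offset the reordering sign when $1\in I$, and that no other cases produce signs. Once that bookkeeping is confirmed, the rest is formal: the combinatorial bijection between concordant pairs, together with the linearity of $\mathcal{T}$, immediately yields $\Sigma(\mathcal{T}(\Lambda))=\mathcal{T}(\Lambda^*)$ up to the global factor $\prod_e c(e)$, and this is precisely the statement that under Lam's map duality becomes the cyclic shift.
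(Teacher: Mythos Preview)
Your proposal is correct and follows the same approach as the paper: both reduce the lemma to the identity $\bigl(\Alt^{n+1}\Sigma\bigr) f_\sigma = f_{s(\wt\sigma)}$. The paper's proof is a single sentence asserting this identity without justification, whereas you supply the missing details, namely the bijection on concordant pairs and the sign check showing that the $(-1)^n$ in the definition of $\Sigma$ exactly cancels the reordering sign when $1\in I$.
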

	
	\begin{proof}
		It follows from $\left(\Alt^{n+1}{\Sigma}\right) f_\sigma = f_{s(\wt \sigma)}$.
	\end{proof}
	
	Let $L^*$ denote the response matrix of $G^*$. We have the following relation between $L^*$ and $R$.
	\begin{proposition}[\cite{KW11}*{Proposition 2.9}]\la{prop:ltilde}
		$L^*_{ij}=\tfrac{1}{2} (R_{ij} + R_{i+1,j+1} - R_{i+1,j} - R_{i,j+1} )$.
	\end{proposition}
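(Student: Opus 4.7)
The plan is to express both sides of the identity as rational expressions in the grove measurements $\Lambda_\sigma$ of $G$ via Kirchhoff's formulas \eqref{ResponseFormula} and \eqref{ResistanceFormula}, converting the dual-network side using the identity $\sigma(F^*) = s(\wt{\sigma(F)})$, and then verifying the resulting combinatorial equality term by term.

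First, I apply \eqref{ResponseFormula} to the dual network $G^*$ (with boundary vertices relabeled to $[n]$ via $s$) to write $L^*_{ij}$ as a ratio of sums of grove weights of $G^*$. Using the bijection $F \leftrightarrow F^*$ with $w(F) = w(F^*)\prod_e c(e)$, the common factor $\prod_e c(e)$ cancels, and $\sigma(F^*) = s(\wt{\sigma(F)})$ lets me re-index by noncrossing partitions $\sigma$ of $[n]$. This yields
\[ L^*_{ij} \;=\; \frac{\sum_{\sigma:\, s(\wt\sigma)\text{ has }\{i,j\}\text{ as its only non-singleton block}} \Lambda_\sigma}{\Lambda_{\{1,2,\ldots,n\}}}, \]
where the denominator simplifies because $s(\wt\sigma)$ being entirely singletons forces $\sigma$ to be the one-block partition.

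Next, applying \eqref{ResistanceFormula} to each of the four $R$-terms and combining, the right-hand side becomes
\[ \tfrac{1}{2}\bigl(R_{ij}+R_{i+1,j+1}-R_{i+1,j}-R_{i,j+1}\bigr) \;=\; \frac{1}{\Lambda_{\{1,\ldots,n\}}} \sum_{\sigma} c_\sigma(i,j)\,\Lambda_\sigma, \]
where
\[ c_\sigma(i,j) \;=\; \tfrac{1}{2}\bigl(\mathbf{1}_{\{i,j\}\text{ conc }\sigma}+\mathbf{1}_{\{i+1,j+1\}\text{ conc }\sigma}-\mathbf{1}_{\{i+1,j\}\text{ conc }\sigma}-\mathbf{1}_{\{i,j+1\}\text{ conc }\sigma}\bigr). \]
Since concordance with a $2$-element set requires $\sigma$ to have exactly two blocks, only $2$-block $\sigma$ contribute; then $c_\sigma(i,j)$ is a discrete second difference of the indicator that $i$ and $j$ lie in different blocks of $\sigma$.

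The heart of the argument, and the main obstacle, is the combinatorial identity that $c_\sigma(i,j) = 1$ precisely when $s(\wt\sigma)$ has $\{i,j\}$ as its unique non-singleton block, and $c_\sigma(i,j) = 0$ otherwise. My approach is casework on the $2$-block noncrossing partition $\sigma$: each such partition has one block that is a contiguous interval of $[n]$, so one can enumerate the configurations by that interval's endpoints relative to the four boundary indices $i, i+1, j, j+1$. In each configuration, computing $\wt\sigma$ using the planar description of the Kreweras complement on the interspersed circle $1, \wt 1, \ldots, n, \wt n$, and then applying $s$, directly verifies the predicted value of $c_\sigma(i,j)$. With this combinatorial identity in hand, matching the numerators of the two displays completes the proof.
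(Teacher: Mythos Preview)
The paper does not prove this proposition; it is quoted verbatim from \cite{KW11}*{Proposition 2.9} and used as a black box in the proof of Theorem~\ref{thm:Rmatrix_chart}. So there is no proof in the paper to compare against, and your proposal supplies an argument the paper omits.

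Your argument is correct. The key combinatorial identity can in fact be verified without a lengthy case analysis. For a two-block noncrossing partition $\sigma=\{A,B\}$ of $[n]$ (with $A$ a cyclic arc), writing $\chi=\chi_A$ one has $\mathbf 1_{\{a,b\}\text{ conc }\sigma}=\chi(a)+\chi(b)-2\chi(a)\chi(b)$, and a direct expansion gives
\[
c_\sigma(i,j)\;=\;-\bigl(\chi(i)-\chi(i+1)\bigr)\bigl(\chi(j)-\chi(j+1)\bigr).
\]
This is nonzero exactly when both $i,i{+}1$ and $j,j{+}1$ straddle the boundary of $A$, i.e.\ when $\{i,j\}$ equals the two ``endpoints'' $\{a{-}1,b\}$ of the arc, and in that case $c_\sigma(i,j)=1$. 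On the other side, the Kreweras complement of $\{A,B\}$ has the single non-singleton block $\{\wt{a-1},\wt b\}$, so $s(\wt\sigma)$ has $\{i,j\}$ as its unique non-singleton block precisely in the same case. This confirms the matching of numerators and completes your proof. The denominator identification $\Lambda_{\{1,\dots,n\}}$ via ``$s(\wt\sigma)$ all singletons $\Leftrightarrow\sigma=\{[n]\}$'' is likewise correct.
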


	\section{Proof of Theorem~\ref{thm:main}} \label{sec:proof_of_thm_main}
	
	Our goal is to show that $\IG^{\Omega}(n+1, 2n)$ is the intersection, inside $\R\P^{\binom{2n}{n+1}-1}$, of $\Gr(n+1,2n)$ with {$\mathcal{H}_n$.}  We will first show that there is a linear space $K$ in $\R^{\binom{2n}{n+1}}$ such that $\IG^{\Omega}(n+1,2n) = \Gr(n+1,2n) \cap \P(K)$.

	$\Omega$ is a skew symmetric pairing $V \times V \to \R$. We use $\Omega$ to induce a linear map $\kappa: \Alt^{n+1}(V) \to \Alt^{n-1}(V)$, defined as follows on simple tensors:
	\[ \kappa(v_1 \wedge v_2 \wedge \cdots \wedge v_{n+1}) = \sum_{1 \leq p < q \leq n+1} (-1)^{p+q-1} \Omega(v_p, v_q) \left(v_1 \wedge \cdots \wedge \widehat{v_p} \wedge \cdots \widehat{v_q} \wedge \cdots \wedge v_{n+1} \right),\] where the hats indicate omitted vectors. We set $K = \Ker \kappa$. 
	
	\begin{lemma} \label{lem:kappa_detects_isotropy}
		Let $v_1 \wedge \cdots \wedge v_{n+1}$ be a nonzero simple tensor in $\Alt^{n+1} V$. We have $\kappa(v_1 \wedge \cdots \wedge v_{n+1}) = 0$ if and only if $\Span(v_1, v_2, \ldots, v_{n+1})$ is isotropic with respect to $\Omega$.
	\end{lemma}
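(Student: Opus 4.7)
The plan is to unwind the definition of $\kappa$ and reduce the lemma to a linear independence statement in $\Alt^{n-1} V$.

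The ``only if'' direction is immediate. If $X = \Span(v_1, \ldots, v_{n+1})$ is $\Omega$-isotropic, then by definition $\Omega(v_p, v_q) = 0$ for all $1 \le p < q \le n+1$, so every term in the defining sum
\[
\kappa(v_1 \wedge \cdots \wedge v_{n+1}) = \sum_{1 \leq p < q \leq n+1} (-1)^{p+q-1} \Omega(v_p, v_q)\, \omega_{p,q},
\qquad \omega_{p,q} := v_1 \wedge \cdots \wedge \widehat{v_p} \wedge \cdots \wedge \widehat{v_q} \wedge \cdots \wedge v_{n+1},
\]
vanishes.

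For the converse, I would use that $v_1 \wedge \cdots \wedge v_{n+1} \neq 0$ implies $v_1, \ldots, v_{n+1}$ are linearly independent, so they form a basis of an $(n{+}1)$-dimensional subspace $X \subseteq V$. The inclusion $X \hookrightarrow V$ induces an injection $\Alt^{n-1} X \hookrightarrow \Alt^{n-1} V$, and the vectors $\omega_{p,q}$ for $1 \leq p < q \leq n+1$ are, up to sign, exactly the standard basis of $\Alt^{n-1} X$ with respect to $v_1, \ldots, v_{n+1}$. In particular, they are linearly independent in $\Alt^{n-1} V$. Thus the equation $\kappa(v_1 \wedge \cdots \wedge v_{n+1}) = 0$ forces every coefficient $(-1)^{p+q-1}\Omega(v_p, v_q)$ to vanish, which by bilinearity and skew-symmetry of $\Omega$ means $\Omega$ vanishes on all of $X$, i.e., $X$ is isotropic.

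There is essentially no obstacle here; the only point to be careful about is verifying that the $\omega_{p,q}$ really are linearly independent in $\Alt^{n-1}V$, which is the one place the hypothesis that the tensor is \emph{simple} and \emph{nonzero} is used. Everything else is a direct unwinding of definitions.
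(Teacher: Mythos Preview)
Your proof is correct and follows essentially the same route as the paper's: both argue that nonvanishing of the simple tensor forces the $v_i$ to be linearly independent, hence the $(n{-}1)$-fold wedges $\omega_{p,q}$ are linearly independent in $\Alt^{n-1}V$, so $\kappa=0$ is equivalent to all $\Omega(v_p,v_q)$ vanishing. Your version simply spells out the linear independence of the $\omega_{p,q}$ via the injection $\Alt^{n-1}X\hookrightarrow\Alt^{n-1}V$, which the paper leaves implicit.
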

	
	\begin{proof}
		The condition that $v_1 \wedge \cdots \wedge v_{n+1} \neq 0$ is equivalent to imposing that the $v_i$ are linearly independent in $V$. 
		We deduce that the wedges $v_1 \wedge \cdots \wedge \widehat{v_p} \wedge \cdots \widehat{v_q} \wedge \cdots \wedge v_{n+1}$ are linearly independent in $\Alt^{n-1} V$. 
		Thus,  $\kappa(v_1 \wedge \cdots \wedge v_{n+1}) = 0$ if and only if $\Omega(v_p, v_q)=0$ for all $p$ and $q$; this is the same as saying that $\Span(v_1, v_2, \ldots, v_{n+1})$ is $\Omega$-isotropic.
	\end{proof}
	
	{
		\begin{corollary}\label{cor:Omega-int}
			$\IG^{\Omega}(n+1, 2n)=\Gr(n+1, 2n)\cap \P(K)$
		\end{corollary}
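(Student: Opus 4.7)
The corollary is essentially a direct unpacking of Lemma~\ref{lem:kappa_detects_isotropy} combined with the definition of the Plücker embedding, so the plan is short. First I would recall that a point $X\in\Gr(n+1,2n)$ is, via the Plücker embedding, represented by a nonzero simple tensor $v_1\wedge\cdots\wedge v_{n+1}\in\Alt^{n+1}V$ that is well-defined up to a nonzero scalar, where $v_1,\ldots,v_{n+1}$ is any basis of $X$. Under the identification $\P(\Alt^{n+1}V)\cong\R\P^{\binom{2n}{n+1}-1}$ used throughout the paper, the condition $X\in\P(K)$ just means that this simple tensor lies in the linear subspace $K=\Ker\kappa$; this is independent of the chosen basis since scaling the tensor does not affect membership in $K$.

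Next I would chain the equivalences: $X\in\Gr(n+1,2n)\cap\P(K)$ iff $v_1\wedge\cdots\wedge v_{n+1}\in\Ker\kappa$ iff (by Lemma~\ref{lem:kappa_detects_isotropy}, whose hypothesis that the simple tensor is nonzero is automatic because $X$ is $(n+1)$-dimensional) $\Span(v_1,\ldots,v_{n+1})=X$ is isotropic with respect to $\Omega$ iff $X\in\IG^{\Omega}(n+1,2n)$, where the last equivalence is just the definition of the isotropic Grassmannian given earlier in the introduction.

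There is no real obstacle here; the only thing to be careful about is the well-definedness of the intersection in projective space, namely that both $\Gr(n+1,2n)$ and $\P(K)$ sit naturally inside $\R\P^{\binom{2n}{n+1}-1}$ and that $K$ is a genuine linear subspace (so its projectivization is well-defined), both of which are immediate: $K$ is the kernel of a linear map $\kappa\colon\Alt^{n+1}V\to\Alt^{n-1}V$, and $\Gr(n+1,2n)$ is the locus of simple (decomposable) tensors. With these observations, the corollary follows in one line from the lemma.
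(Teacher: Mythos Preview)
Your proposal is correct and follows essentially the same approach as the paper's proof, which simply invokes Lemma~\ref{lem:kappa_detects_isotropy} together with the fact that $\Gr(n+1,2n)$ is the projectivization of the locus of simple tensors in $\Alt^{n+1}V$. Your version just unpacks these steps in slightly more detail.
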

		\begin{proof}
			This follows immediately from the previous lemma and the fact that $\Gr(n+1, 2n)$ is the projectivization (in $\P \Alt^{n+1} V$) of the space of simple tensors.
		\end{proof}
	}
	
	

	Next we want to check that {$\mathcal{H}_n$,} the image of Lam's map, is contained in $K$.  We will do this by exploring the relationship of $f_\sigma$ and a set of vectors $\{v_k\}$ in $V$.  Let $(\sigma,\wt{\sigma})$ be a Kreweras pair with blocks $B_1$, $B_2$, \dots, $B_{n+1}$. For each block $B_k$, we define $v_k$ as follows: If $B_k \subseteq [n]$, then $v_k = \sum_{b \in B_k} (-1)^b e_b$; if $B_k \subseteq [\wt{n}]$ then $v_k = \sum_{\wt{b} \in B_k} (-1)^b e_{\wt{b}}$.  We will need two lemmas about these vectors.
	\begin{lemma}\label{lem:vecs-iso}
		For a Kreweras pair $(\sigma,\wt{\sigma})$, $\Span(v_1, v_2, \ldots, v_{n+1})$ is isotropic for $\Omega$.
	\end{lemma}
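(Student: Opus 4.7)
The plan is to verify $\Omega(v_i, v_j) = 0$ for every pair of blocks $B_i, B_j$ by direct computation. Two cases are easy: from the explicit formula for $\Omega$, the pairing $\Omega(e_a, e_c)$ vanishes unless exactly one of $a, c$ lies in $[n]$ and the other in $[\wt n]$, so $\Omega(v_i, v_j) = 0$ automatically whenever $B_i$ and $B_j$ are both contained in $[n]$ or both in $[\wt n]$. The real work is therefore the mixed case $B_i \subseteq [n]$, $B_j \subseteq [\wt n]$.

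Write $B_j = \{\wt b_1, \ldots, \wt b_m\}$ in cyclic order; these $m$ elements split the circular order on $[n] \sqcup [\wt n]$ into $m$ arcs. By non-crossing of $\sigma \sqcup \wt \sigma$, the block $B_i$ lies entirely inside one such arc $A_k$, running from $\wt b_k$ to $\wt b_{k+1}$. Among the $[n]$-elements $b_k + 1, b_k + 2, \ldots, b_{k+1}$ of $A_k$, only the leftmost $b_k + 1$ (whose cyclic predecessor is $\wt b_k$) and the rightmost $b_{k+1}$ (whose cyclic successor is $\wt b_{k+1}$) are cyclically adjacent to any element of $B_j$, so these are the only possible sources of nonzero contributions to $\Omega(v_i, v_j)$. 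A careful sign check, matching the signs in the definitions of $v_i, v_j$ against those in the formula for $\Omega$ (including the wraparound factor $(-1)^n$), shows that the cyclic-successor adjacency contributes $+1$ and the cyclic-predecessor adjacency contributes $-1$, so
\[ \Omega(v_i, v_j) \;=\; [b_{k+1} \in B_i] \;-\; [b_k + 1 \in B_i]. \]
Thus the desired vanishing reduces to the combinatorial statement that $b_k + 1$ and $b_{k+1}$ always lie in the same $\sigma$-block.

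I would prove this combinatorial statement by contradiction, using the fact that $\wt \sigma$ is the coarsest non-crossing completion of $\sigma$ on $[\wt n]$ (equivalently, the unique one with $n + 1 - |\sigma|$ blocks). If $b_k + 1 \in B_L$ and $b_{k+1} \in B_R$ with $B_L \neq B_R$, then non-crossing of $\sigma \sqcup \wt \sigma$ forces $\max(B_L) < \min(B_R)$ inside $A_k$, and the intermediate $[\wt n]$-element $\wt{\max(B_L)}$ lies in some $\wt \sigma$-block $B_j' \neq B_j$ (since $\wt b_k$ and $\wt b_{k+1}$ are consecutive within $B_j$, no element of $B_j$ can sit between them). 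The strategy is to show that replacing $B_j$ and $B_j'$ by their union $B_j \cup B_j'$ still yields a non-crossing partition of $[\wt n]$ that is non-crossing with $\sigma$, thereby contradicting coarseness of $\wt \sigma$. Verifying that the merge introduces no new crossing is the main obstacle, and I expect it to hinge on (i) using non-crossing of $B_j'$ with $B_L$ and with $B_R$ to confine all elements of $B_j'$ to the $[\wt n]$-gap between $\wt{\max(B_L)}$ and $\wt{\min(B_R)-1}$; (ii) checking that every other $\sigma$-block inside $A_k$ (nested inside $B_L$ or $B_R$, or lying strictly between them) remains in a single arc of the merged block; and (iii) verifying that any third $\wt \sigma$-block remains in one arc of $B_j \cup B_j'$, either outside $A_k$ or nested within an arc cut by the elements of $B_j'$.
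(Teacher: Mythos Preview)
Your proposal is correct and follows the same overall strategy as the paper: reduce to the mixed case $B_i\subseteq[n]$, $B_j\subseteq[\wt n]$, place $B_i$ inside a single arc of $B_j$, and observe that only the two arc-endpoint positions can contribute to $\Omega(v_i,v_j)$, with opposite signs. Your formula $\Omega(v_i,v_j)=[b_{k+1}\in B_i]-[b_k+1\in B_i]$ is exactly what underlies the paper's case analysis.

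The one genuine difference is in how the two-adjacency fact is handled. The paper simply asserts it: having assumed some adjacency exists, it declares ``there are two cases \dots\ in both cases there are no adjacent pairs of elements in $B_i$ and $B_j$ other than the listed ones,'' and then checks the two contributions cancel. You instead isolate and prove the underlying combinatorial statement---that $b_k+1$ and $b_{k+1}$ lie in the same $\sigma$-block---via the coarseness characterization of the Kreweras complement. Your merge argument is sound: confining $B_j'$ between $\wt{\max B_L}$ and $\wt{\min B_R-1}$ using non-crossing with $B_L$ and $B_R$ is exactly the step that makes $B_j\cup B_j'$ non-crossing with every other block, and then coarseness of $\wt\sigma$ gives the contradiction. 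So your write-up is more self-contained than the paper's on this point, at the cost of some length. (A shorter alternative justification: $\sigma\sqcup\wt\sigma$ has $n+1$ blocks and all $2n$ cyclically consecutive pairs lie in different blocks; since the block-adjacency structure of a Kreweras pair is a tree with $n$ edges, each edge must account for exactly two adjacencies.)
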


	\begin{proof}
		It is enough to check that $\Omega(v_i, v_j)=0$ for any $i$ and $j$. This is obvious unless there is some $b \in B_i$ and $c \in B_j$ with $\Omega(e_b, e_c) \neq 0$, so we may assume that such a pair $(b,c)$ exists. Without loss of generality, let $B_i \subseteq [n]$ and $B_j \subseteq [\wt{n}]$. There are two cases:
		\begin{enumerate}
			\item There are some $p < q$ such that $p \in B_i$, $\wt{p} \in B_j$, $\wt{q-1} \in B_j$ and $q \in B_i$.
			\item There are some $p \leq q$ such that $\wt{p-1} \in B_j$, $p \in B_i$, $q \in B_i$ and $\wt{q} \in B_j$.
		\end{enumerate}
		In both cases, there are no adjacent pairs of elements in $B_i$ and $B_j$ other than the listed ones. In the first case, if $p=q=1$, then
		\[ \Omega(v_i, v_j) = (-1)^{p+p} + (-1)^{n+1+n} = 0. \]
		Otherwise,
		\[ \Omega(v_i, v_j) = (-1)^{p+p} + (-1)^{q+(q-1)} = 0. \]
		In the second case, if $p=1$ then
		\[ \Omega(v_i, v_j) =(-1)^{q+q}+(-1)^{n+1+n}=0 . \]
		Otherwise,
		\[ \Omega(v_i, v_j) = (-1)^{p+(p-1)} + (-1)^{q+q} = 0 . \]
	\end{proof}
	
	\begin{lemma}\label{lem:f-sig-vecs}
		For a Kreweras pair $(\sigma,\wt{\sigma})$, $f_{\sigma}= \pm v_1 \wedge v_2 \wedge \cdots \wedge v_{n+1}$.
	\end{lemma}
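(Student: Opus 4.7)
The plan is to compute the coefficient of each basis vector $e_{I,\wt{I}}$ in the simple tensor $v_1 \wedge v_2 \wedge \cdots \wedge v_{n+1}$ directly, using the observation that the row $v_k$ is supported only in the coordinates indexed by $B_k$, and then to show that all the nonzero coefficients agree up to a single global sign by a parity argument that invokes the noncrossing property of $(\sigma,\wt{\sigma})$.

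Concretely, I would interpret the coefficient of $e_{I,\wt{I}}$ as the $(n+1)\times(n+1)$ minor of the matrix whose rows are $v_1,\ldots,v_{n+1}$, taken in the columns indexed by $I\sqcup\wt{I}$. If $(I,\wt{I})$ is not concordant with $(\sigma,\wt{\sigma})$, then, since $|I\sqcup\wt{I}|=n+1$ equals the number of blocks, some block $B_k$ fails to contribute an element; the corresponding row of the submatrix is identically zero, and the minor vanishes. If $(I,\wt{I})$ is concordant, each block $B_k$ contributes a unique element $b_k$, the submatrix has a single nonzero entry $(-1)^{|b_k|}$ in row $k$ (where $|b|$ denotes the integer index, whether tilded or not), and the determinant evaluates to $\mathrm{sgn}(\pi_C)\prod_k(-1)^{|b_k|}$, where $\pi_C$ records the rank of each $b_k$ in the sorted order of $I\sqcup\wt{I}$.

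The crux is showing this expression depends only on $(\sigma,\wt{\sigma})$ and not on the concordance $C=(b_1,\ldots,b_{n+1})$. Since the set of concordances is connected under single-block moves (replacing one $b_k$ by a different $b'_k \in B_k$), I would verify invariance under such a move. For $b_k<b'_k$ in the linear order $1<\wt{1}<2<\wt{2}<\cdots$, the product $\prod_k(-1)^{|b_k|}$ changes by $(-1)^{|b_k|+|b'_k|}$, while $\mathrm{sgn}(\pi_C)$ changes by $(-1)^m$, where $m$ counts the chosen representatives $b_j$ ($j\ne k$) strictly between $b_k$ and $b'_k$ in that order. The noncrossing property of the Kreweras pair forces every other block $B_j$ to lie entirely inside the arc from $b_k$ to $b'_k$ or entirely outside it, so $m$ also equals the number of blocks of $(\sigma,\wt{\sigma})$ contained in that arc.

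The remaining work is then a parity count: the arc between $b_k$ and $b'_k$ contains a known number of elements of $[n]\sqcup[\wt{n}]$ (for instance, $2(|b'_k|-|b_k|)-1$ when both lie in $[n]$), these elements partition into complete blocks plus the elements of $B_k$ strictly between $b_k$ and $b'_k$, and matching parities yields $m\equiv|b_k|+|b'_k|\pmod 2$. The main obstacle I expect is the bookkeeping across the cases in which $b_k$ and $b'_k$ lie in $[n]$ versus $[\wt{n}]$, and the subtlety that $B_k$ itself may contain further elements within the arc. I would handle this by first reducing to the case of adjacent $b_k,b'_k$ within $B_k$ (so the arc contains no elements of $B_k$), verifying the parity in that reduced setting, and then composing transpositions to reach an arbitrary pair of concordances.
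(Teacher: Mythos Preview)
Your approach is essentially the same as the paper's: both expand the wedge product, observe that only concordant $(I,\wt I)$ contribute, reduce to comparing signs under a single-block swap $b_k\mapsto b'_k$, identify the two sign contributions (from the $(-1)^{|b|}$ coefficients and from the reordering permutation), use noncrossing to equate the permutation-sign change with the number $m$ of blocks lying in the arc, and reduce to the case where $b_k,b'_k$ are adjacent in $B_k$. The one place where the paper is more explicit than your sketch is the final parity count: rather than just matching parities, the paper adjoins the singleton $\{b_k\}$ to the $m$ blocks in the open arc to obtain a genuine Kreweras pair on the $2(|b'_k|-|b_k|)$-element interval $\{b_k,\wt{b_k},\ldots,\wt{b'_k-1}\}$, which forces $m+1=(|b'_k|-|b_k|)+1$ and hence $m=|b'_k|-|b_k|$ exactly. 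You will want this trick (or an equivalent one) to close the argument, since the raw element count $2(|b'_k|-|b_k|)-1$ does not by itself pin down $m$ modulo $2$.
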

	
	\begin{proof}
		It is clear that, when we expand out the wedge product, we will get a sum $\sum \pm e_{I, \tilde{I}}$, running over all pairs $(I, \tilde{I})$ concordant with $(\sigma, \tilde{\sigma})$. 
		The challenge is to check that all the $e_{I, \tilde{I}}$ come with the same sign.
		
		It is clearly enough to check that $e_{I, \tilde{I}}$ and $e_{J, \tilde{J}}$ come with the same sign where $I \sqcup \tilde{I}$ and $J \sqcup \tilde{J}$ differ by changing a single element.
		Suppose that $p$ and $q$ are elements of the block $B_k$, and that $J \sqcup \tilde{J}$ is obtained from $I \sqcup \tilde{I}$ by replacing $p$ with $q$.
		Without loss of generality, we may assume that $B_k \subseteq [n]$, that $p<q$ and that $p$ and $q$ are consecutive elements of $B_k$.
		
		There are two places where we get sign factors comparing the coefficients of $e_{I, \tilde{I}}$ and $e_{J, \tilde{J}}$. The first is that $e_p$ and $e_q$ come with coefficients $(-1)^p$ and $(-1)^q$, so we pick up a factor of $(-1)^{p-q}$ from there. 
		
		The second place is that we start by expanding the wedge product $v_1 \wedge v_2 \wedge \cdots \wedge v_{n+1}$ and then must reorder each term using the order $1 < \wt{1} < 2 < \wt{2} < \cdots < n < \wt{n}$. So we need to work out how many of the other factors $e_r$ have $r$ between $p$ and $q$ in this order; in other words, how many factors have $r \in \{ \wt{p}, p+1, \wt{p+1}, \cdots, q-1, \wt{q-1} \}$.
		Each block other than $B_k$ is either contained in $\{ \wt{p}, p+1, \wt{p+1}, \cdots, q-1, \wt{q-1} \}$, or else is disjoint from this interval.
		So we pick up a factor of $(-1)^{\ell}$ where $\ell$ is the number of blocks in this interval. In short, we need to show that $\ell \equiv q-p \bmod 2$.
		
		Define a non-crossing partition $(\tau, \tilde{\tau})$ of $\{ p, \wt{p}, p+1, \wt{p+1}, \cdots, q-1, \wt{q-1} \}$ by using the $\ell$ blocks from $(\sigma,\wt{\sigma})$, plus one more singleton block $\{ p \}$. Then $(\tau, \tilde{\tau})$ is a Kreweras pair for this $2(q-p)$ element set. Therefore, $\ell+1 = q-p+1$ and we deduce that $\ell = q-p$. In particular, $\ell \equiv q-p \bmod 2$.
	\end{proof}
	
	{
		\begin{corollary}\label{cor:image}
			$\mathcal{H}_n\subseteq K$
		\end{corollary}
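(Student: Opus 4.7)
The plan is to directly combine the three lemmas just proved. By definition, $\mathcal{H}_n$ is the projectivization of the image $\mathcal{T}(\R^{\Cat_n}) \subseteq \extp^{n+1} V$, which by the formula $\mathcal{T}\big((\Lambda_\sigma)\big) = \sum_\sigma \Lambda_\sigma f_\sigma$ is the linear span of the vectors $f_\sigma$ as $\sigma$ ranges over non-crossing partitions of $[n]$. Since $K = \Ker \kappa$ is a linear subspace of $\extp^{n+1}V$ and the projectivization relation $\mathcal{H}_n \subseteq \P(K)$ is equivalent to the linear containment $\mathcal{T}(\R^{\Cat_n}) \subseteq K$, it suffices to show $f_\sigma \in K$ for every non-crossing partition $\sigma$.

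Fix such a $\sigma$ and form the Kreweras pair $(\sigma, \wt{\sigma})$. By Lemma~\ref{lem:f-sig-vecs}, we have $f_\sigma = \pm\, v_1 \wedge v_2 \wedge \cdots \wedge v_{n+1}$, where the $v_k$ are the vectors attached to the blocks of $(\sigma,\wt\sigma)$. In particular, $f_\sigma$ is a nonzero simple tensor, so the $v_k$ are linearly independent and the conclusion of Lemma~\ref{lem:kappa_detects_isotropy} applies. By Lemma~\ref{lem:vecs-iso}, the subspace $\Span(v_1,\ldots,v_{n+1})$ is isotropic for $\Omega$. Therefore Lemma~\ref{lem:kappa_detects_isotropy} yields $\kappa(f_\sigma) = \pm\, \kappa(v_1 \wedge \cdots \wedge v_{n+1}) = 0$, so $f_\sigma \in K$.

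By linearity of $\kappa$ it follows that every element of $\mathcal{T}(\R^{\Cat_n})$ lies in $K$, and passing to projectivizations gives $\mathcal{H}_n \subseteq \P(K)$, as required. There is no real obstacle here: the work was done in the two preceding lemmas, and this corollary is just the assembly step.
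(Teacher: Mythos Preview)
Your proof is correct and follows exactly the same approach as the paper's: combine Lemmas~\ref{lem:kappa_detects_isotropy}, \ref{lem:vecs-iso}, and \ref{lem:f-sig-vecs} to deduce $\kappa(f_\sigma)=0$, and then use that $\mathcal{H}_n$ is spanned by the $f_\sigma$'s. Your write-up is in fact more careful than the paper's in distinguishing the linear containment $\mathcal{T}(\R^{\Cat_n})\subseteq K$ from its projectivization $\mathcal{H}_n\subseteq \P(K)$.
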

		\begin{proof}
			Taken together, Lemmas~\ref{lem:kappa_detects_isotropy},~\ref{lem:vecs-iso}, and~\ref{lem:f-sig-vecs} imply that $\kappa(f_\sigma)=0$.  Since $\mathcal{H}_n$ is the set of linear combinations of the $f_\sigma$'s, we have $\mathcal{H}_n\subseteq K$.
		\end{proof}
	}
	
	
	We are now finally ready to prove our main theorem.
	\begin{proof}[Proof of Theorem~\ref{thm:main}]
		{From Corollary~\ref{cor:image}, we know} 
		that ${\mathcal{H}_n\subseteq}K$.  
		{By~\cite{Stanley}*{Example 4}, }the dimension of $K$ is $\Cat_n$.  This means ${\mathcal{H}_n=}K$.  {From Corollary~\ref{cor:Omega-int}, we have that $\IG^{\Omega}(n+1, 2n)=\Gr(n+1, 2n)\cap \P(K)=\Gr(n+1, 2n)\cap \mathcal{H}_n$.  Thus, we conclude, using Theorem~\ref{theorem:Lammain}, that $\IG^{\Omega}_{\geq 0}(n+1, 2n)=\mathcal{X}_n$.}
	\end{proof}

	\begin{remark}The row span of $XD$ is the space of pairs of harmonic and conjugate harmonic functions on the cactus network $G$. This space is identified with the space of discrete holomorphic functions (that is functions in the kernel of a Kasteleyn matrix) on a dimer model associated to $G$ by the generalized Temperley's bijection of Kenyon, Propp and Wilson \cite{KPW}. However the kernel of this Kasteleyn matrix constructed using the Kasteleyn sign in \cite{Ken02}*{Section 3.1} is not totally nonnegative because it does not satisfy the Kasteleyn sign condition at boundary faces. We can resolve this by modifying the Kasteleyn sign using a gauge transformation at boundary vertices, which {corresponds to} multiplying by the diagonal matrix $D$.
	\end{remark}

	\section{$U_{\text{not shorted}}$ and $U_{\text{connected}}$}
	
	Schubert charts for Lagrangian grassmannians are well understood, but the details for a degenerate form such as $\Omega$ or $\Omega^D$ are not that standard; we work them out here.
	We recall the linear space $K$ from Section~\ref{sec:proof_of_thm_main}; the image of Thomas Lam's map, spanned by the vectors $f_{\sigma}$. 
	We also reuse the notations $e_{I, \wt{I}}$ and $V$ from that section.
	
	We begin by proving a useful lemma:
	
	\begin{lemma} \label{lem:extreme_Deltas}
		For $v=\sum_{\sigma} \Lambda_\sigma f_\sigma$ in $K$, we have $\Delta_{[n], \{ \wt{1} \}}(v) = \Delta_{[n], \{ \wt{2} \}}(v) = \cdots = \Delta_{[n], \{ \wt{n} \}}(v)=\Lambda_{\{1\},\{2\},\dots,\{n\}}$ and $\Delta_{\{ 1 \}, [\wt{n}]}(v) = \Delta_{\{ 2 \}, [\wt{n}]}(v) = \cdots = \Delta_{\{ n \}, [\wt{n}]}(v)=\Lambda_{\{1,2,\dots,n\}}$.
	\end{lemma}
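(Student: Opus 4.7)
The plan is to compute each Plücker coordinate directly from the definitions, using the fact that $v = \sum_\sigma \Lambda_\sigma f_\sigma$ and that $f_\sigma = \sum_{(I,\wt{I}) \text{ concordant with } (\sigma,\wt\sigma)} e_{I,\wt{I}}$. Interchanging the two sums shows that the coefficient of $e_{I,\wt{I}}$ in $v$ is
\[
\Delta_{I,\wt{I}}(v) = \sum_{(\sigma,\wt\sigma)\,\text{concordant with}\,(I,\wt{I})} \Lambda_\sigma,
\]
which is precisely the formula $\Delta_{I,\wt{I}} = \sum_{(\sigma,\wt\sigma) \text{ concordant with } (I,\wt I)} \Lambda_\sigma$ stated in the introduction. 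The task therefore reduces to identifying, for the specific coordinates in the statement, which Kreweras pairs $(\sigma,\wt\sigma)$ are concordant with $(I,\wt{I})$.

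First I would handle $\Delta_{[n],\{\wt{k}\}}(v)$. The concordance condition with $I = [n]$ forces each block of $\sigma$ to contain exactly one element of $[n]$; since $\sigma$ partitions $[n]$, this means every block is a singleton, so $\sigma$ must be the all-singletons partition with $n$ blocks. Its Kreweras complement $\wt\sigma$ then has $n + 1 - n = 1$ block, namely $\wt\sigma = \{[\wt{n}]\}$, and the concordance condition with $\wt{I} = \{\wt{k}\}$ is automatically satisfied for every $k$. Hence the only $\sigma$ contributing to the sum is the all-singletons partition, and the coordinate equals $\Lambda_{\{1\},\{2\},\dots,\{n\}}$, independent of $k$.

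Next I would handle $\Delta_{\{k\},[\wt{n}]}(v)$ by the dual argument. Concordance with $\wt{I} = [\wt{n}]$ forces each block of $\wt\sigma$ to be a singleton; hence $\wt\sigma$ has $n$ blocks and $\sigma$ has exactly one, namely $\sigma = \{[n]\}$. Concordance of $I = \{k\}$ with this single-block partition holds for every $k$, so the only contributor is $\sigma = \{[n]\}$, giving $\Lambda_{\{1,2,\dots,n\}}$ independently of $k$.

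There is no real obstacle here; the whole content is a careful bookkeeping of which non-crossing partitions are forced by the concordance condition when $I$ or $\wt{I}$ is as large as possible. The only thing to be slightly careful about is ensuring that the Kreweras complement of the all-singletons partition is $\{[\wt{n}]\}$ and vice versa, which follows directly from the block-count identity $|\sigma| + |\wt\sigma| = n+1$ together with the non-crossing constraint on $\sigma \sqcup \wt\sigma$.
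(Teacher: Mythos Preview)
Your proposal is correct and follows essentially the same argument as the paper: you identify which $\sigma$ can contribute to the coefficient of $e_{[n],\{\wt{k}\}}$ (respectively $e_{\{k\},[\wt{n}]}$) by noting that concordance forces $|I|$ to equal the number of blocks of $\sigma$, pinning down $\sigma$ uniquely. The paper's proof is slightly terser but makes the same observation.
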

	
	\begin{proof}
		In the sum~\eqref{eq:f_sigma_defn} defining $f_{\sigma}$, all the terms $e_{I, \wt{I}}$ have $\#(I)$ equal to the number of blocks of $\sigma$ (since $I$ and $\sigma$ are concordant). Thus, terms of the form $e_{[n],\ \{\wt{k}\}}$ can only occur if $\sigma$ is the partition $\{ 1\},\{2\},\ldots, \{n \}$. Since all the $e_{[n],\ \{\wt{k}\}}$ occur with coefficient $1$ in $f_{\{ 1\},\{2\},\ldots, \{n \}}$, they all occur with the same coefficient $\Lambda_{\{ 1\},\{2\},\ldots, \{n \}}$ in $v$. 
		A similar argument applies to $e_{\{n\}, [ \wt{n} ]}$. 
	\end{proof} 
	
	Recall that $U_{\text{not shorted}}$ is the open set in $K$ where the $\Delta_{[n],\{\wt{k}\}}$ are nonzero and $U_{\text{connected}}$ is the open set in $K$ where the $\Delta_{\{ 1 \}, [\wt{n}]}$ are nonzero.  Recall also the operation $s$ and the cyclic shift ${\Sigma}$ from Section \ref{sec:dual}. Notice that $s([\wt n])=[ n]$ and $s(\{ k\})=\{\wt{k-1}\}$, so ${\Sigma}$ maps $U_{\text{connected}}$ to $U_{\text{not shorted}}$.

	We next prove that the names $U_{\text{not shorted}}$ and $U_{\text{connected}}$ are appropriate.
	
	\begin{lemma}\label{lem:conshort}
		The totally nonnegative points of $U_{\mathrm{not\  shorted}}$ correspond to planar electrical networks and those of $U_{{\rm connected}}$ correspond to cactus networks where the underlying graph is connected.
	\end{lemma}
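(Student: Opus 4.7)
The plan is to reduce the statement via Lemma~\ref{lem:extreme_Deltas} to a purely combinatorial question about grove measurements, and then finish using classical matrix-tree identities. By Theorem~\ref{theorem:Lammain}, every totally nonnegative point of $\mathcal{H}_n$ is of the form $\mathcal{T}(G) = \sum_\sigma \Lambda_\sigma(G)\, f_\sigma$ for a unique cactus network $G\in\mathcal{R}_n$, and by Lemma~\ref{lem:extreme_Deltas} the common value of the $\Delta_{[n],\{\wt k\}}(\mathcal{T}(G))$ equals $\Lambda_{\{1\},\{2\},\dots,\{n\}}(G)$, while the common value of the $\Delta_{\{k\},[\wt n]}(\mathcal{T}(G))$ equals $\Lambda_{\{1,2,\dots,n\}}(G)$. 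Hence $\mathcal{T}(G)\in U_{\text{not shorted}}$ iff the all-singletons grove measurement of $G$ is strictly positive, and $\mathcal{T}(G)\in U_{\text{connected}}$ iff the one-block grove measurement of $G$ is strictly positive, so the task reduces to characterizing which cactus networks satisfy each positivity condition.

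For $U_{\text{not shorted}}$, I would first dispatch the easy direction: if the shape $\sigma$ of $G$ contains a block $B$ with $|B|\geq 2$, then every boundary vertex in $B$ is glued to a single vertex of the quotient graph $\Gamma$, so all such vertices lie in a common block of $\sigma(F)$ for every grove $F$; in particular $\Lambda_{\{1\},\dots,\{n\}}(G) = 0$. For the converse, suppose $G$ is a genuine planar electrical network (shape all singletons). The weighted matrix-tree theorem for graphs with marked sinks identifies $\Lambda_{\{1\},\dots,\{n\}}(G)$ with the determinant of the Laplacian $\mathcal{L}$ restricted to the non-boundary vertices of $\Gamma$. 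Because all conductances are strictly positive and because (for any $G$ defining a nonzero point of $\mathcal{R}_n$, in particular any minimal network) every non-boundary vertex is in the same component of $\Gamma$ as some boundary vertex, this reduced Laplacian is positive definite and its determinant is strictly positive.

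For $U_{\text{connected}}$, Kirchhoff's classical formula expresses $\Lambda_{\{1,\dots,n\}}(G)$ as the sum of $\prod_{e\in T} c(e)$ over spanning trees $T$ of $\Gamma$; since the conductances are strictly positive, this sum is nonzero exactly when $\Gamma$ admits a spanning tree, i.e., is connected. The main obstacle in the whole plan is the strict-positivity step in the planar direction for $U_{\text{not shorted}}$, which requires excluding degenerate non-boundary components of $\Gamma$: this follows from the minimality hypothesis built into $\mathcal{R}_n$, but in any case it can be handled directly by greedily extending disjoint subtrees outward from each boundary vertex to cover every non-boundary vertex, producing an explicit all-singleton grove with strictly positive weight.
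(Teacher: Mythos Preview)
Your proposal is correct and follows the same skeleton as the paper: use Lemma~\ref{lem:extreme_Deltas} (together with Theorem~\ref{theorem:Lammain}) to translate membership in $U_{\text{not shorted}}$ or $U_{\text{connected}}$ into positivity of $\Lambda_{\{1\},\{2\},\dots,\{n\}}$ or $\Lambda_{\{1,2,\dots,n\}}$, and then read off the geometric meaning from the existence of the relevant grove.

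The difference is one of thoroughness rather than strategy. The paper's proof explicitly writes out only the implications $\Lambda_{\{1\},\dots,\{n\}}>0\Rightarrow$ not shorted and $\Lambda_{\{1,\dots,n\}}>0\Rightarrow$ connected, treating the converses as evident. You supply both converses in detail, and for the direction ``planar $\Rightarrow \Lambda_{\{1\},\dots,\{n\}}>0$'' you give two arguments: the matrix-tree identification of $\Lambda_{\{1\},\dots,\{n\}}$ with the determinant of the reduced Laplacian, and a direct greedy construction of an all-singletons grove. Either works; the greedy construction (or the equivalent edge-deletion argument: take any grove and repeatedly cut an edge on a path joining two boundary vertices in the same component) is closer in spirit to the paper's one-line style, while the matrix-tree route is heavier machinery but makes the strict positivity transparent. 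Your caveat about needing every interior vertex to reach a boundary vertex is well-placed and correctly resolved: a network violating this has all $\Lambda_\sigma=0$ and does not define a point of $\mathcal{R}_n$.
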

	\begin{proof}
		The totally nonnegative points of $U_{\mathrm{not\  shorted}}$ correspond to cactus networks with $\Lambda_{\{1\},\{2\},\dots,\{n\}} >0$. Therefore the cactus network has a grove with each vertex in $[n]$ in a different component, which implies that no two vertices are shorted. The totally nonnegative points of $U_{\mathrm{connected}}$ correspond to cactus networks with $\Lambda_{\{1,2,\dots,n\}} >0$. This means that the cactus network has a spanning tree and therefore it is connected.
	\end{proof}
	
	We are now ready to prove Theorem~\ref{thm:SchubertChart}, which gives a matrix form for elements of $U_{\text{not shorted}}$ and $U_{\text{connected}}$.
	
	\begin{proof}[Proof of Theorem~\ref{thm:SchubertChart}]
		We will first verify the form for matrices in $U_{\text{not shorted}}$ that is claimed in Theorem~\ref{thm:SchubertChart}; the analogous claim for matrics in $U_{\text{connected}}$ will then follow from cyclic symmetry. 
		
		Let $X$ be an $\Omega$-isotropic subspace in $U_{\text{not shorted}}$ and let $M$ be an $(n+1) \times (2n)$ matrix so that the rows of $MD$ span $X$. 
		So the rows of $M$ are isotropic for $\Omega^D$. 
		
		Since the Pl\"ucker coordinates {$\Delta_{[n],\ \{\wt{k}\}}(M)$} are nonzero, the columns indexed by {$[n]$} are linearly independent and we can normalize them to be standard basis vectors as in Theorem~\ref{thm:SchubertChart}. This fixes $M$ up to left multiplication by {nonsingular} matrices of the form $\begin{sbm} \ast & 0 \\ \ast & \mathrm{Id}_n \end{sbm}$. 
		{Since $\Delta_{[n],\ \{\wt{1}\}}(M)=\Delta_{[n],\ \{\wt{2}\}}(M)=\dots=\Delta_{[n],\ \{\wt{n}\}}(M)$ by Lemma~\ref{lem:extreme_Deltas}, the top row of $M$ must then be must be a scalar multiple of $(0,1,0,1,\cdots)$, and we can fix that scalar to be $1$.} We have now fixed $M$  up to left multiplication by matrices of the form $\begin{sbm} 1 & 0 \\ \ast & \mathrm{Id}_n \end{sbm}$ or, in other words, up to adding multiples of the top row to the other rows. So far, we know that $M$ is of the form 
		\be \label{mat:s}
		\begin{bmatrix}
			0&1&0&1&\cdots&0&1 \\
			1&S_{11}&0&S_{12}&\cdots&0&S_{1n} \\
			0&S_{21}&1&S_{22}&\cdots&0&S_{2n} \\
			\vdots&\vdots&\vdots&\vdots&\cdots&\vdots&\vdots\\
			0&S_{n1}&0&S_{n2}&\cdots&1&S_{nn} \\
		\end{bmatrix}.  
		\ee
		and that the matrix $S$ is unique up to adding a multiple of $(1,1,\ldots,1)$ to each row.  It remains to show that $S$ has the required symmetry property.
		
		We now impose the condition that the rows of $M$ are $\Omega^D$ isotropic. The first row is in the kernel of $\Omega^D$, so this is automatic. Look at the $(p+1)$-st and the $(q+1)$-st row, for $p<q$. We compute that the pairing of these rows under $\Omega^D$ is $S_{pq} - S_{p(q-1)} + S_{q(p-1)} - S_{qp}$. 
		We can reorder this as  $S_{p(q-1)}-S_{pq} =S_{q(p-1)}-S_{qp}$. In other words, the rows of $M$ are $\Omega^D$ isotropic if and only if the matrix $S_{i(j-1)}-S_{ij}$ is symmetric, as required.

		Suppose $X$ is an $\Omega$-isotropic subspace in $U_{\text{connected}}$. $X\cdot \Sigma$ is then an $\Omega$-isotropic subspace in $U_{\text{not shorted}}$, so it can be represented by a matrix $MD$, where $M$ has the form $(\ref{mat:s})$. $D\Sigma^{-1} D^{-1}$ is the linear transformation $(e_1,e_{\wt 1},e_2,e_{\wt 2},\dots,e_{n},e_{\wt n}) \mapsto (e_{\wt 1},-e_2,e_{\wt 2},-e_3,\dots,e_{\wt{n}},-e_{1})$, so we get
		$$MD\Sigma^{-1}=
		\begin{bmatrix} 
			-1&0&-1&0&\cdots&-1&0 \\
			T_{11}&1&T_{12}&0&\cdots&T_{1n}&0 \\
			T_{21}&0&T_{22}&1&\cdots&T_{2n}&0 \\
			\vdots&\vdots&\vdots&\vdots&\cdots&\vdots&\vdots\\
			T_{n1} & 0&T_{n2}&0 & \cdots & T_{nn}&1 \\
		\end{bmatrix} D,$$ 
		where $T_{ij}=-S_{i(j-1).}$

	\end{proof}
	
	We can now prove Theorems~\ref{thm:Lmatrix_chart} and~\ref{thm:Rmatrix_chart}, which tell us how the matrices from Theorem~\ref{thm:SchubertChart} relate to the response and effective resistance matrices.

	\begin{proof}[Proof of Theorem~\ref{thm:Lmatrix_chart}]
		Let $G$ be a planar electrical network with response matrix $L$. By Lemma \ref{lem:conshort}, Lam's map associates to $G$ a totally nonnegative point in $U_{\text{not shorted}}$. By Theorem~\ref{thm:SchubertChart}, this point is the row span of a matrix $M D$, where $M$ is of the form (\ref{mat:s}). We have {for $i \neq j$}, $\Delta_{[n],\{\tilde i\}}(MD)= (-1)^{{n \choose 2}+1}$ and $\Delta_{ [n] \setminus \{j\},\{ \widetilde{i-1},\tilde i\}}(MD)=(-1)^{{n \choose 2}+1}(S_{j(i-1)}-S_{j i})$ (with indices periodic modulo $n$), therefore using (\ref{DeltaRatios}), we get $S_{j(i-1)}-S_{ji}=L_{ij}$.  {
			Since all non-diagonal entries of the two matrices are the same and they both have rows that sum to 0, the matrices are the same.}
	\end{proof}

	\begin{proof}[Proof of Theorem~\ref{thm:Rmatrix_chart}]
		Suppose $G$ is a connected cactus network with effective resistance matrix $R$. Using Lemma \ref{lem:conshort}, we have that Lam's map associates to $G$ a totally nonnegative point $X$ in $U_{\text{connected}}$. Therefore Lam's map associates to the dual cactus network $G^*$ a totally nonnegative point in $U_{\text{not shorted}}$, which by Lemma \ref{lem:cyclic} is $X \cdot {\Sigma}$. By Theorem \ref{thm:Lmatrix_chart} and Proposition \ref{prop:ltilde}, $X\cdot {\Sigma}$ is represented by a matrix $MD$, where $M$ has the form 
		(\ref{mat:s}) with $T_{j(i+1)}-T_{ji}=S_{j(i-1)}-S_{ji}=L^*_{ij}$.
	\end{proof}

	\section{Comparison other work on Lagrangian Grassmannians.}\label{sec:Karpman}
	
	We aware of two earlier studies of total positivity for Lagrangian Grassmannians. 
	In this section, we explain why we think this work is not very close to ours, but suggest that there might be room for a common generalization.
	Since this section is an overview, we will omit many proofs.
	
	\subsection{Work of Karpman}	
	We first discuss the work of Karpman~\cite{Karpman1, Karpman2}. 
	Karpman studies totally nonnegative spaces in $\R^{2m}$ which are isotropic with respect to the 
	non-degenerate skew form 
	\[ \langle e_i, e_j \rangle = \begin{cases} (-1)^j & i+j = 2m+1 \\ 0 & \text{otherwise.} \\ \end{cases}  \]
	Let's denote this space $\LG^{\Karp}(m,2m)$.
	
	Recall that our space of electrical networks is contained in $\IG^{\Omega}(n+1,2n) \cong \LG(n-1, 2n-2)$. 
	So one might imagine relating $\IG^{\Omega}(n+1, 2n)$ either to $\LG^{\Karp}(n-1, 2n-2)$ or, perhaps, to $\LG^{\Karp}(n,2n)$.
	In either case, we do not see such a relationship. 
	
	The space of electrical networks has an $n$-fold rotational symmetry. 
	In contrast, $\LG^{\Karp}(m,2m)$ does not have a rotational symmetry. 
	For example, $\LG^{\Karp}(2,4)$ has three codimension one strata, two of which are triangles and one of which is a quadilateral, so they cannot be permuted by a $3$-fold symmetry.
	By contrast, the three codimension one strata of $\IG^{\Omega}(4,6)$ are all quadrilaterals, and are permuted cyclically by the rotational symmetry of electrical networks.
	
	Moreover, the enumeration of cells does not match. Both $\LG^{\Karp}(n-1,2n-2)$ and $\IG^{\Omega}(n+1, 2n)$ have  dimension $\binom{n}{2}$ and $n$ codimension-$1$ cells.
	However  (for $n \geq 3$), $\LG^{\Karp}(n-1, 2n-2)$ has $\binom{n+1}{2}-1$ codimension $2$ cells and $\IG^{\Omega}(n+1,2n)$ has $\binom{n+1}{2}$ codimension $2$ cells.

	\subsection{Gaussoids}
	Boege, D'Ali, Kahle and Sturmfels introduced the study of \emph{gaussoids}~\cite{gaussoids}, motivated by problems in algebraic statistics. 
	This work also involves considering subspaces of the Lagrangian Grassmannian whose Pl\"ucker coordinates obey a sign condition.
	However, we will argue in this section that the sign condition they consider is very different from either Karpman's or ours, and is likely not related to the theory of total positivity at all.
	
	It is common in statistics to study $m$ quantities and summarize the correlations between them in a symmetric matrix known as the \emph{covariance matrix}.
	Boege, D'Ali, Kahle and Sturmfels embed $m \times m$ symmetric matrices in $G(m,2m)$ by sending the symmetric matrix $\Sigma$ to the row span of the $m \times (2m)$ matrix $[ \mathrm{Id} \ \Sigma]$.
	This $m$-plane is Lagrangian for the skew form $\left[ \begin{smallmatrix} 0 & \mathrm{Id}_m \\ - \mathrm{Id}_m & 0 \\ \end{smallmatrix} \right]$.
	We'd rather choose an embedding which is Lagrangian for Karpman's form. In order to do this, we define an $m \times 2m$ matrix $A(\Sigma)$ by
	\[
	A(\Sigma)_{ij} = \begin{cases} (-1)^{i-1} & i+j=m+1 \\ 0 & j \leq m,\ i+j \neq m+1 \\ \sigma_{i(j-m)} & j \geq m \end{cases}.
	\]
	We depict the example $m=4$ below.
	\[
	A(\Sigma) = \begin{bmatrix} 
		0&0&0&1&\sigma_{11}&\sigma_{12}&\sigma_{13}&\sigma_{14} \\
		0&0&-1&0&\sigma_{21}&\sigma_{22}&\sigma_{23}&\sigma_{24} \\
		0&1&0&0&\sigma_{31}&\sigma_{32}&\sigma_{33}&\sigma_{34} \\
		-1&0&0&0&\sigma_{41}&\sigma_{42}&\sigma_{43}&\sigma_{44} \\
	\end{bmatrix} 
	\]

	Boege \emph{et al} impose that $\Sigma$ is positive definite, meaning that all of its principal minors are nonnegative. 
	We now describe the corresponding condition in terms of the Pl\"ucker coordinates $\Delta_I(A(\Sigma))$.
	For $I \subset [2m]$, set $\overline{I} = \{ 2m+1-i : i \in [2m] \setminus I \}$.
	The matrix $\Sigma$ is positive definite if and only if $\Delta_I(A(\Sigma))>0$ for all $I$ such that $I = \overline{I}$. 
	(These are the face labels which occur on the ``spine" in the sense of~\cite{Karpman2}.)
	This condition forms a dense open subset of $\LG^{\Karp}_{\geq 0}$: a cell of Karpman's space indexed by bounded affine permutation $f$ obeys this condition if and only if $f(\{1,2,\ldots, m \}) = \{ m+1, m+2, \ldots, 2m \}$. 
	So, a dense open set in $\LG^{\Karp}(m,2m)_{\geq 0}$ gives rise to gaussoids, with certain additional positivity conditions coming from the other Pl\"ucker coordinates.
	
	There is a notion of a ``positive gaussoid", motivated by ideas from algebraic statistics. 
	However, we will see that this notion of positivity is not the one obtained by asking that the Pl\"ucker coordinates of $A(\Sigma)$ be nonnegative, nor can this be fixed by any simple rearrangement of columns or signs. 
	
	Let $K \subset [m]$ and let $i$ and $j$ be distinct elements of $[m] \setminus K$. 
	The \emph{almost principal minor} $a_{ij|K}$ is the minor of $\Sigma$ whose rows are indexed by $\{ i \} \cup K$ and whose columns are indexed by $\{ j \} \cup K$, where the 
	the elements of $K$ are listed after $i$ and after $j$, and $K$ is ordered the same way in both cases.
	For example, $a_{13|2} = \sigma_{13} \sigma_{22} - \sigma_{12} \sigma_{23}$. 
	A gaussoid is \emph{positive} if all its \emph{almost principal minors} are nonnegative.
	We note that this is a different sign condition than the one we get by imposing that $\Delta_{1267}(A(\Sigma)) > 0$ (here we take $m=4$, to match our example above); we have $\Delta_{1267} = \sigma_{12} \sigma_{33} - \sigma_{13} \sigma_{22}$. 
	
	This is not a minor technicality; we will now show that there is no way to embed symmetric matrices into the Grassmannian such that the positivity conditions from the positive Grassmannian correspond to the positivity conditions from gaussoids.
	More precisely, let $b_1$, $b_2$, \dots, $b_{2m}$ be some permutation of $[2m]$ and let $\delta_i$ and $\epsilon_{ij}$ be elements of $\{ \pm 1 \}$, indexed by $1 \leq i, j \leq m$.
	Given a symmetric matrix $\Sigma$, form a linear space
	\[ B(\Sigma)_{ij} = 
	\begin{cases} \delta_i & j = b_i \\ 0 & j \in \{ b_1, b_2, \ldots,  b_m \} \setminus \{ b_i \} \\ \epsilon_{ik} \sigma_{ik} & j = b_{k+m} \\ \end{cases} \]
	In other words, the columns $\{ b_1, b_2, \ldots, b_m \}$ contain a signed permutation matrix, with the order of $(b_1, \ldots, b_m)$ encoding the permutation and the $\delta_i$ encoding the signs. 
	The columns  $\{ b_{m+1}, b_{m+2}, \ldots, b_{2m} \}$ contain $\pm$ the entries of $\Sigma$, with the columns reordered according to the order of $(b_{m+1}, b_{m+2}, \ldots, b_{2m})$ and with signs given by the $\epsilon_{ij}$. 
(We don't need to consider reordering the rows of $\Sigma$, as we could always put them back in order by left multiplying by a permutation matrix.)
	
	\begin{theorem}
		For $m \geq 3$, there is no choice of $b_1$, $b_2$, \dots, $b_{2m}$,  $\delta_i$ and $\epsilon_{ij}$  such that the principal and almost principal minors occur as a subset of the Pl\"ucker coordinates of $\Sigma$. 
	\end{theorem}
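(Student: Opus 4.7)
The plan is to argue by contradiction: assume such a choice of $(b,\delta,\epsilon)$ exists and derive an inconsistent system of sign equations.

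First I would show that $\epsilon_{ij}$ must factor as $\epsilon_{ij} = \alpha_i\beta_j$. Consider a Pl\"ucker coordinate $\Delta_I(B(\Sigma))$ required to equal a $2\times 2$ minor $\det\Sigma_{\{i_1,i_2\},\{k_1,k_2\}}$. Laplace expansion along the identity columns of $B$ inside $I$ expresses $\Delta_I$ as a signed product of $\delta$-factors with the $2\times 2$ minor of the $\epsilon$-weighted matrix $(\epsilon_{ij}\sigma_{ij})$, namely $\epsilon_{i_1k_1}\epsilon_{i_2k_2}\sigma_{i_1k_1}\sigma_{i_2k_2} - \epsilon_{i_1k_2}\epsilon_{i_2k_1}\sigma_{i_1k_2}\sigma_{i_2k_1}$. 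For this to equal $\pm(\sigma_{i_1k_1}\sigma_{i_2k_2} - \sigma_{i_1k_2}\sigma_{i_2k_1})$ one needs $\epsilon_{i_1k_1}\epsilon_{i_2k_2} = \epsilon_{i_1k_2}\epsilon_{i_2k_1}$ for every four-tuple, which by a standard argument forces the desired factorization.

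Given this factorization, each $\Delta_I(B(\Sigma))$ is a signed monomial in the $3m$ sign variables $\delta_i,\alpha_i,\beta_k$ times a single minor of $\Sigma$, so each required matching becomes an equation $\prod \delta_i \cdot \prod \alpha_j \cdot \prod \beta_k = \pm 1$ whose right-hand side is a product of Laplace expansion signs and ordering corrections depending on $b$. Working over $\mathbb{F}_2$, the plan is to exhibit a linear dependence among these equations whose right-hand sides multiply to $-1$. For $m \geq 3$, two natural candidate quadruples of minors are $\{\sigma_{ii},\sigma_{jj},a_{ij|\emptyset},a_{ji|\emptyset}\}$ for distinct $i,j$, and $\{\sigma_{ii},a_{ik|\emptyset},\det\Sigma_{\{i,j\}},a_{ik|j}\}$ for distinct $i,j,k$. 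Direct bookkeeping shows that in each quadruple every variable $\delta_\ell,\alpha_\ell,\beta_\ell$ appears in an even number of the four monomials, so the LHS product is a perfect square and consistency demands that the four Laplace/ordering signs on the RHS multiply to $+1$.

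The main obstacle is to verify that, for every permutation $b$, at least one such quadruple (perhaps after enlarging the family) has sign product equal to $-1$. I expect the cleanest route is to observe that $B(\Sigma)$ lies in a Lagrangian Grassmannian for the skew-symmetric form determined by $(b,\delta,\epsilon)$, and to exploit the induced involution $I \mapsto I^*$ on Pl\"ucker coordinates: the polynomial symmetry $a_{ij|K} = a_{ji|K}$ pairs two Pl\"ucker coordinates that are both required to equal the same almost principal minor, and consistency forces the Lagrangian sign $\Delta_I/\Delta_{I^*}$ to equal $+1$ for every almost principal pair. A counting argument tracking these sign discrepancies across all almost principal pairs --- equivalently, a combinatorial case analysis on $b$ --- should complete the contradiction once $m \geq 3$.
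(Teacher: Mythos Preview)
Your first step---deriving the factorization $\epsilon_{ij}=\alpha_i\beta_j$ from the requirement that $2\times 2$ principal and almost principal minors appear as single Pl\"ucker coordinates---matches the paper and is correct. After that, however, your proposal leaves the essential work undone. You correctly identify ``the main obstacle'' as showing that for every permutation $b$ some quadruple has sign product $-1$, but you do not resolve it: the two candidate quadruples are never checked, and the suggested route via the Lagrangian involution $I\mapsto I^*$ is speculative (it is not even clear that the $I$ and $I^*$ corresponding to $a_{ij|K}$ and $a_{ji|K}$ are distinct, or that their sign ratio is computable without already knowing $b$).

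The paper closes this gap with a structural reduction you are missing: it uses the $1\times 1$ minors. Every entry $\sigma_{ij}$ is itself a principal or almost principal minor, so each must occur as a Pl\"ucker coordinate with the correct (positive) sign. Comparing the signs of $\sigma_{ij_1}$ and $\sigma_{ij_2}$ for fixed $i$ yields the relation
\[
\beta_{j_1}/\beta_{j_2}=(-1)^{\#\{i'\neq i:\ b_{m+j_1}<b_{i'}<b_{m+j_2}\}},
\]
whose right-hand side must be independent of $i$. This forces $\{b_1,\ldots,b_m\}$ to be a cyclically consecutive block of $[2m]$, and by dihedral symmetry one may take $\{b_1,\ldots,b_m\}=[m]$. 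Only after this reduction does the paper run the sign computation: for any $j,k,\ell$ with $b_{m+j}<b_{m+k}<b_{m+\ell}$, three explicit $2\times 2$ minors give $-\beta_j=\beta_k=\beta_\ell$, while a second triple gives $\beta_j=\beta_k=-\beta_\ell$, a contradiction. Without first pinning down $b$ in this way, your quadruple strategy has to confront all $(2m)!$ permutations at once, and neither of your proposed quadruples produces a contradiction uniformly over all $b$.
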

	
	\begin{proof}
		Start with some choice of $b_1$, $b_2$, \dots, $b_{2m}$,  $\delta_i$ and $\epsilon_{ij}$; we show that we cannot obtain all of the principal and almost principal minors with their correct signs.
		To begin with, consider the $2 \times 2$ principal minor $\sigma_{aa} \sigma_{bb} - \sigma_{ab} \sigma_{ba}$.
		The condition that $\sigma_{aa} \sigma_{bb}$ appears with the opposite sign from $\sigma_{ab} \sigma_{ba}$ implies that $\epsilon_{aa} \epsilon_{bb} = \epsilon_{ab} \epsilon_{ba}$.
		Similarly, from the almost principal minor,  $\sigma_{ab} \sigma_{cc} - \sigma_{ac} \sigma_{cb}$, we obtain that  $\epsilon_{ab} \epsilon_{cc} = \epsilon_{ac} \epsilon_{cb}$. 
		
		These relations force there to be some signs $\alpha_1$, \dots, $\alpha_m$, $\beta_1$, \dots, $\beta_m$ in $\{ \pm 1 \}$ such that $\epsilon_{ab} = \alpha_a \beta_b$. 
		Then we can multiply row $a$ by $\alpha_a$, which will preserve all Pl\"ucker coordinates up to a global sign, so we can assume that $\alpha_1=\dots=\alpha_m=1$.  
		In short, we have reduced to the situation that the columns of $B(\Sigma)$ are, in some order, $\pm$ the columns of $\Sigma$ and $\pm$ the standard basis vectors, where the signs of the columns are given by $\beta_j$.
		
		We now look at the $1 \times 1$ minors of $\Sigma$. Every $1 \times 1$ minor is either principal or almost principal, so our condition is that each $\sigma_{ij}$ occurs with positive sign as a minor of $B(\Sigma)$. 
		Fix a row $i$ and consider the condition that $\sigma_{i j_1}$ and $\sigma_{i j_2}$ occur with the same sign.
		By switching the names $j_1$ and $j_2$, we may assume that $b_{j_1} < b_{j_2}$.
		We have 
		\[ \Delta_{b_1 b_2 \cdots b_{i-1} b_{i+1} \cdots b_m b_{j_a}} = \pm \sigma_{i j_a}  \]
		where the sign comes from (1) the partial permutation matrix in columns $\{ b_1, b_2, \cdots b_{i-1}, b_{i+1}, \cdots, b_m \}$, (2) the $\delta$'s and (3) the factor $\beta_{j_a}$. If we take the ratio of these formulas for $j_1$ and $j_2$, almost all the factors cancel and we deduce that
		\[ \beta_{j_1}/\beta_{j_2} = (-1)^{\# \{ i' \in [m] : i' \neq i,\ b_{m+j_1} < b_{i'} < b_{m+j_2} \}} . \]
		The left hand side is independent of $i$, so the right hand side must be as well, and we deduce that, for any $j_1$ and $j_2$, either all of $\{ b_1, \ldots, b_m \}$ lie between $b_{m+j_1}$ and $b_{m+j_2}$, or else none of them do. So the column indices $\{ b_1, \ldots, b_m \}$ are a cyclically consecutive subset of $[2m]$.
		Using the dihedral symmetry of the totally nonnegative Grassmannian, we may now assume that $\{ b_1, \ldots, b_m \} = [m]$.
		
		We now look at the $2 \times 2$ principal and almost principal minors. 
		Choose three indices $j$, $k$, $\ell$, with $b_j < b_k < b_{\ell}$.
		Consider the minors $\sigma_{jj} \sigma_{kk} - \sigma_{jk} \sigma_{kj}$,  $\sigma_{jj} \sigma_{k\ell} - \sigma_{j\ell} \sigma_{kj}$ and  $\sigma_{j\ell} \sigma_{kk} - \sigma_{jk} \sigma_{k\ell}$.  Up to sign, these are the Pl\"ucker coordinates  $\Delta_{\{b_1 b_2 \cdots \cdots b_m, b_{j+m}, b_{k+m}\}\setminus\{b_j,b_k\}}$, $\Delta_{\{b_1 b_2 \cdots \cdots b_m, b_{k+m}, b_{\ell+m}\}\setminus\{b_j,b_k\}}$, and $\Delta_{\{b_1 b_2 \cdots \cdots b_m, b_{j+m}, b_{\ell+m}\}\setminus\{b_j,b_k\}}$.  

If the principal and almost principal minors appear as Pl\"ucker coordinates, then there is a choice of $\beta_j,\beta_k,\beta_\ell$ such that the maximal minors of the above matrix are exactly (not just up to sign) the minors of $\Sigma$ listed above.  Computing these minors and cancelling out common sign factors, we obtain that $-\beta_j=\beta_k=\beta_\ell$.  However, we can do the same analysis with the principal and almost principal minors $\sigma_{kk} \sigma_{\ell j} - \sigma_{kj} \sigma_{\ell k}$, $\sigma_{kj} \sigma_{\ell \ell} - \sigma_{k\ell} \sigma_{\ell j}$ and $\sigma_{kk} \sigma_{\ell \ell} - \sigma_{k\ell} \sigma_{\ell k}$. From this, we deduce that $\beta_j = \beta_k = -\beta_{\ell}$, and we obtain a contradiction.
	\end{proof}

	\subsection{Possibility of a synthesis?}
	
	We have seen that Karpman's study of total positivity for Lagrangian Grassmannians is not the same as that for electrical networks.
	It might be interesting, though, to ask whether there is a common framework which accommodates both subjects. 
	Let $K$ be any skew symmetric pairing $\R^m \times \R^m \to \R$. 
	The condition that a $k$-plane $V$ be isotropic for $K$ is a linear condition on the corresponding point of the Grassmannian $G(k,n)$, written in Pl\"ucker coordinates; let's write $H$ for the corresponding plane in $\R \P^{\binom{n}{k}-1}$.
	Here are some natural questions to ask, which we have seen have good answers for both the forms $\Omega$, from electrical networks, and for Karpman's form $\langle \ , \ \rangle$.
	
	\begin{enumerate}
		\item Under what circumstances does $H$ cross the cells of the totally nonnegative Grassmannian, $G(k,n)_{\geq 0}$, transversely?
		\item Under what circumstances is the intersection of $H$ with each cell either empty or else an open ball?
		\item Can one give a simple description, in terms of the combinatorics of bounded permutations, for when that intersection is nonempty?
	\end{enumerate}
	
	One could also ask these questions for linear subspaces of Pl\"ucker space other than those coming from skew forms.
	For example, the condition that a $k$-plane in $\R^n$ contains a given vector is also a linear condition in Pl\"ucker coordinates.
	
	Finally, we have seen that positive Gaussoids are very different; they do not come from $G(k,n)_{\geq 0}$ at all.
	One could wonder whether the nice behavior of positive Gaussoids suggests that there are other sign patterns of Pl\"ucker coordinates, besides positivity, which might be interesting to study.
	Alternatively, the nice behavior of positive matrices might make one wonder whether there is any interest in statistical models where the minors of $\Sigma$ have signs corresponding to totally positive points of $\LG(m,2m)$.

	\bibliographystyle{ams}
	\bibliography{references}
	
\end{document}